\newsavebox{\@brx}
\newcommand{\llangle}[1][]{\savebox{\@brx}{\(\m@th{#1\langle}\)}%
  \mathopen{\copy\@brx\kern-0.5\wd\@brx\usebox{\@brx}}}
\newcommand{\rrangle}[1][]{\savebox{\@brx}{\(\m@th{#1\rangle}\)}%
  \mathclose{\copy\@brx\kern-0.5\wd\@brx\usebox{\@brx}}}
\newtheorem{theorem}{Theorem}
\newtheorem{lemma}[theorem]{Lemma}
\newtheorem{proposition}[theorem]{Proposition}
\newtheorem{corollary}[theorem]{Corollary}
\theoremstyle{definition}
\newtheorem{definition}[theorem]{Definition}
\newtheorem{remark}[theorem]{Remark}
\numberwithin{theorem}{section}
\numberwithin{equation}{section}
\newcommand{\N}{\ensuremath{\mathbb{N}}}
\newcommand{\R}{\ensuremath{\mathbb{R}}}
\newcommand{\mint}{- \mskip-19,5mu \int}
\newcommand{\dx}{\mathrm{d}x}
\def\Xint#1{\mathchoice
    {\XXint\displaystyle\textstyle{#1}}%
    {\XXint\textstyle\scriptstyle{#1}}%
    {\XXint\scriptstyle\scriptscriptstyle{#1}}%
    {\XXint\scriptscriptstyle\scriptscriptstyle{#1}}%
    \!\int}
\def\XXint#1#2#3{\setbox0=\hbox{$#1{#2#3}{\int}$}
    \vcenter{\hbox{$#2#3$}}\kern-0.5\wd0}
\def\bint{\Xint-}
\def\dashint{\Xint{\raise4pt\hbox to7pt{\hrulefill}}}
\def\XXiint#1#2#3{\setbox0=\hbox{$#1{#2#3}{\iint}$}
    \vcenter{\hbox{$#2#3$}}\kern-0.5\wd0}
\renewcommand{\epsilon}{\varepsilon}
\renewcommand{\rho}{\varrho}
\renewcommand{\epsilon}{\varepsilon}
\renewcommand{\rho}{\varrho}
\renewcommand{\d}{\:\! \mathrm{d}}
\DeclareMathOperator{\loc}{loc}
\DeclareMathOperator{\Tail}{Tail}
\numberwithin{equation}{section}
\subjclass[2020]{}
\keywords{}
\subjclass[2020]{35B65, 35J60, 35R09, 47G20}
\keywords{Fractional $p$-Poisson equation, Calder\'on \& Zygmund type estimate, gradient regularity}
\begin{document}
\renewcommand{\refname}{References} 
\renewcommand{\abstractname}{Abstract} 
\title[Gradient estimates for fractional $p$-Poisson equation]{Gradient estimates for the \\ fractional $p$-Poisson equation}

\author[V. B\"ogelein]{Verena B\"{o}gelein}
\address{Verena B\"ogelein\\
Fachbereich Mathematik, Universit\"at Salzburg\\
Hellbrunner Str. 34, 5020 Salzburg, Austria}
\email{verena.boegelein@plus.ac.at}

\author[F. Duzaar]{Frank Duzaar}
\address{Frank Duzaar\\
Fachbereich Mathematik, Universit\"at Salzburg\\
Hellbrunner Str. 34, 5020 Salzburg, Austria}
\email{frankjohannes.duzaar@plus.ac.at}

\author[N. Liao]{Naian Liao}
\address{Naian Liao\\
Fachbereich Mathematik, Universit\"at Salzburg\\
Hellbrunner Str. 34, 5020 Salzburg, Austria}
\email{naian.liao@plus.ac.at}

\author[K.~Moring]{Kristian Moring}
\address{Kristian Moring\\
	Fachbereich Mathematik, Universität Salzburg\\
	Hellbrunner Str.~34, 5020 Salzburg, Austria}
\email{kristian.moring@plus.ac.at}

\date{\today}

\begin{abstract}
We consider local weak solutions to the fractional $p$-Poisson equation of order $s$, i.e. $\left( - \Delta_p\right)^s u = f$. In the range $p>1$ and $s\in \big(\frac{p-1}{p},1\big)$ we prove Calder\'on \& Zygmund type estimates at the gradient level. More precisely, we show  for any $q>1$ that
\begin{equation*}
    f\in L^{\frac{qp}{p-1}}_{\rm loc}
    \quad\Longrightarrow\quad
    \nabla u\in L^{qp}_{\rm loc}.
\end{equation*}
The qualitative result is accompanied by a local quantitative estimate.
\end{abstract}
\makeatother

\maketitle

\section{Introduction}
Let $\Omega \subset \R^n$ be a bounded, open set. Consider the $(s,p)$-Poisson equation
\begin{equation} \label{eq:frac-p-lap}
\left( - \Delta_p\right)^s u = f \quad \text{ in } \Omega
\end{equation}
 where $f$ is given in a proper Lebesgue space and the fractional $p$-Laplace operator is defined by
$$
(- \Delta_p )^s u(x) := \mathrm{p.v.} \int_{\R^n} \frac{2 |u(x) - u(y)|^{p-2} (u(x) - u(y) )}{|x-y|^{n+sp}}  \, \d y.
$$
Even though the notion of solutions (see Definition \ref{def:weak-sol}) only involves fractional differentiability of $u$, recent progress has confirmed that the gradient $\nabla u$ actually exists in $L^{p}_{\loc}(\Omega)$ whenever $f\in L^{\frac{p}{p-1}}_{\loc}(\Omega)$ and $s \in \big(\frac{p-1}{p},1\big)$; cf.~\cite{DKLN-higherdiff}.
Our main result goes beyond this regularity in the sense of a  Calder\'on \& Zygmund type estimate.
Notation can be found in \S~\ref{S:notation}. In particular,  $\mathrm{Tail}(\cdot)$ is defined in  \eqref{Eq:tail}. 

\begin{theorem}\label{thm:main}
Let $p\in(1,\infty)$ and $s \in \big(\frac{p-1}{p},1\big)$.
Then, for any local weak solution $u \in W^{s,p}_{\loc}(\Omega) \cap L^{p-1}_{sp}(\R^n)$ to the fractional $p$-Poisson equation~\eqref{eq:frac-p-lap} in the sense of Definition \ref{def:weak-sol} with $f \in L^{\frac{pq}{p-1}}_{\loc}(\Omega)$, $q>1$ we have 
\begin{equation*}
     \nabla u\in L^{pq}_{\loc}\big(\Omega,\R^n\big).
\end{equation*}
Moreover, there exists a constant $c =  c(n,p,s,q)$ such that for any ball $B_{2R} \equiv B_{2R}(x_o) \Subset \Omega$ we have the quantitative local $L^{pq}$-gradient estimate
\begin{align*}
\bigg[ \bint_{B_{R}} |\nabla u|^{pq} \, \d x \bigg]^\frac{1}{q} &\leq c \,\bint_{B_{2R}} |\nabla u|^p \, \d x + \frac{c}{R^{(1-s)p}} \bigg[ \bint_{B_{2R}} |R^s f|^{\frac{qp}{p-1}} \, \d x \bigg]^\frac{1}{q} \\
&\quad  + \frac{c}{R^p} \mathrm{Tail} \big(u-(u)_{B_{2R}};B_{2R}\big)^p.
\end{align*}
\end{theorem}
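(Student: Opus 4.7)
The strategy is a nonlinear Calder\'on--Zygmund scheme in the spirit of Iwaniec and Acerbi--Mingione, adapted to the nonlocal setting. Two ingredients drive the argument: a comparison estimate with an $(s,p)$-harmonic replacement of $u$, and a priori gradient regularity for the replacement (available via the results of \cite{DKLN-higherdiff} in the range $s\in(\frac{p-1}{p},1)$). Once these are in place, a good-$\lambda$ covering argument transfers the $L^{pq/(p-1)}$-integrability of $f$ into the $L^{pq}$-integrability of $\nabla u$.

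\textbf{Step 1 (Comparison with the $(s,p)$-harmonic replacement).} On any ball $B_\rho\Subset B_{2R}$ I would introduce the solution $v$ of the nonlocal Dirichlet problem
\[
(-\Delta_p)^s v=0\text{ in }B_\rho,\qquad v=u\text{ in }\R^n\setminus B_\rho.
\]
Testing the difference of the weak formulations of $u$ and $v$ against $u-v\in W^{s,p}_0(B_\rho)$ and exploiting the monotonicity of the $p$-Laplace-type nonlinearity together with a fractional Sobolev--Poincar\'e inequality, I would derive a comparison bound of the type
\[
\bint_{B_\rho}|\nabla u-\nabla v|^p\,\d x\le c\,\rho^{sp}\bint_{B_\rho}|f|^{\frac{p}{p-1}}\,\d x + \text{(lower-order tail remainder)}.
\]

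\textbf{Step 2 (Regularity of $v$ and excess decay).} Since $v$ is $(s,p)$-harmonic in $B_\rho$, I would invoke the gradient bounds from \cite{DKLN-higherdiff} to obtain an $L^\infty$-type estimate
\[
\sup_{B_{\rho/2}}|\nabla v|^p\le c\bint_{B_\rho}|\nabla v|^p\,\d x+\frac{c}{\rho^p}\Tail\big(v-(v)_{B_\rho};B_\rho\big)^p.
\]
Because $u-v\equiv 0$ outside $B_\rho$, the tail of $v$ agrees with that of $u$ up to terms controlled by local $L^p$-norms of $\nabla u$. Combining this with Step~1, I obtain a decay estimate for a suitable gradient excess of $u$, whose right-hand side consists of a frozen Riesz-like term in $f$ plus the original tail of $u$.

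\textbf{Step 3 (Stopping-time / good-$\lambda$ argument).} On $B_R$ I would run a Calder\'on--Zygmund covering at super-level sets of a localised maximal function of $|\nabla u|^p$. At every exit-time ball $B$, the combined outcome of Steps~1--2 yields a distributional inequality of the form
\[
\big|\{M(|\nabla u|^p)>A\lambda\}\cap B\big|\le\eps\,|B|+c\,\big|\{M(|f|^{p/(p-1)})^{\frac{p-1}{p}}>\delta(\eps)\lambda^{1/p}\}\cap B\big|,
\]
up to a harmless tail contribution. Choosing $\eps$ small, multiplying by $\lambda^{q-1}$, integrating in $\lambda$ and using the strong $(pq/(p-1))$-bound of $M$ produces the $L^{pq}$-bound for $\nabla u$ on $B_R$ in the quantitative form of the theorem. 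The principal obstacle is the non-local tail, which enters both Step~1 and Step~2 and resists a single pointwise freezing at scale $2R$. To keep the iteration closed I would exploit monotonicity of $\Tail(\cdot;B_r)$ in $r$ together with a dyadic telescoping, absorbing the sum-over-scales tail contributions into a single term at scale $2R$. The hypothesis $s>\frac{p-1}{p}$ is crucial here, both for ensuring the baseline regularity $\nabla u\in L^p_{\loc}$ from \cite{DKLN-higherdiff} and for the convergence of the tail summation.
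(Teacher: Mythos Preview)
Your outline follows the paper's general strategy (comparison + homogeneous regularity + level-set iteration), but there are two genuine gaps.

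\textbf{Step 2 is the serious one.} You assume an $L^\infty$ bound for $\nabla v$, citing \cite{DKLN-higherdiff}. No such bound is known; Lipschitz regularity for $(s,p)$-harmonic functions is open. The paper explicitly works around this by using instead the $L^\theta$ gradient bound for any finite $\theta$ (Theorem~\ref{thm:Lq-gradient}, from \cite{BDLMS-1,BDLMS-2}), and then chooses $\theta=2pq$. The missing idea is Lemma~\ref{lem:elementary-superlevel}: on the super-level set $\{|\nabla u|>\mathsf A\lambda\}$ one writes
\[
|\nabla u|^p \le 2^p|\nabla u-\nabla v|^p + \frac{2^{\theta-1}}{(\mathsf A\lambda)^{\theta-p}}|\nabla v|^\theta,
\]
so that the $L^\theta$ bound for $\nabla v$ produces a term $c\,\mathsf A^{-(\theta-p)}\lambda^p|B_i|$, which is made small by taking $\mathsf A$ large. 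Without this device, your good-$\lambda$ inequality in Step~3 does not follow, because you cannot assert that $\{|\nabla v|>\tfrac12\mathsf A\lambda\}\cap B$ is small.

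\textbf{Step 1 is also incomplete.} Testing the difference equation with $u-v$ and using monotonicity gives only the fractional energy $[u-v]_{W^{s,p}}^p$, and Sobolev--Poincar\'e then controls $\|u-v\|_{L^p}$, not $\|\nabla(u-v)\|_{L^p}$. The paper bridges this gap via a Gagliardo--Nirenberg interpolation (Corollary~\ref{cor:GN}) between $\|u-v\|_{L^p}$ and $[\nabla(u-v)]_{W^{\alpha,p}}$, where the latter is controlled through the higher differentiability $u,v\in W^{1+\alpha,p}_{\loc}$ of \cite{DKLN-higherdiff}; see Proposition~\ref{lem:comparison-estimate}. The resulting gradient comparison carries an extra $\delta\|\nabla u\|_{L^p}^p$ term (plus tail), with $\delta$ free, which is essential for closing the iteration. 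Your displayed inequality in Step~1 hides both the interpolation step and this $\delta$-structure.

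The tail handling you sketch (dyadic telescoping, absorption to scale $2R$ using $sp'>1$) is essentially the paper's Lemma~\ref{lem:tail-rho}, and the maximal-function versus direct exit-time formulation in Step~3 is a stylistic variant. But the two gaps above are structural: as written, neither Step~1 nor Step~2 can be carried out with the tools you invoke.
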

Although Calder\'on \& Zygmund theory for local and non-local equations has been a very active area of research in recent decades, to our knowledge, Theorem~\ref{thm:main} is the first {\it gradient-level} Calder\'on \& Zygmund type estimate for the fractional $p$-Poisson equation.

\begin{remark}\label{rem:mainth}\upshape
On the right-hand side of the quantitative estimate, the $L^p$-integral of the gradient can be replaced by the fractional Gagliardo semi-norm. In fact, we have
\begin{align*}
    \bigg[ \bint_{B_{R}} &|\nabla u|^{pq} \, \d x \bigg]^\frac{1}{q} \\
    &\le
    \frac{c}{R^{(1-s)p}}
    \mint_{B_{2R}}\int_{B_{2R}}
    \frac{|u(x)-u(y)|^p}{|x-y|^{n+sp}}\,\d x\d y
    +
    \frac{c}{R^{(1-s)p}}\bigg[\mint_{B_{2R}}
    |R^sf|^{\frac{pq}{p-1}}\,\d x\bigg]^\frac1q\\
    &\phantom{\le\,}+
    \frac{c}{R^p}
    \mathrm{Tail}\big(u-(u)_{B_{2R}}; B_{2R}\big)^p
    .
\end{align*}
This does not change the dependence of the constant.
See Corollary~\ref{Cor:Lp} and the remark following it for details.
\hfill $\Box$
\end{remark}

\subsection{Brief background on Calder\'on \& Zygmund estimates for local operators}
The study of integrability for the gradient of weak solutions to elliptic PDEs with an inhomogeneous term has a long tradition. In the linear case, representation of solution is available, and hence the issue can be dealt with by the theory of singular integrals \cite{CZ-1, CZ-2}. However, as one attempts to establish analogous regularity results for nonlinear equations, systems, or even parabolic counterparts, representation formulae are out of order, and moreover, tools from harmonic analysis, though proven profound, seem to become increasingly incompatible. Inevitably, a method that directly involves the equation or system is asked for. In this respect, the spirit to bear in mind is that sharp local regularity estimates for nonlinear equations or systems play an equally effective role as representation formulae for linear ones, cf.~\cite{Iwaniecz, DiBe-Man, Ki-Zh, Caf-Per}.  Still, an important tool from harmonic analysis -- maximal functions -- appeared to be essential in these works. 
A new technique, free of harmonic analysis tools, was developed in \cite{Acerbi-Min}, where Calder\'on \& Zygmund estimates for nonlinear parabolic equations were proved. 
Since then it has witnessed generalizations and applications in many ways. It is worth pointing out that an important point of \cite{Acerbi-Min} in line with \cite{Caf-Per} is that by implementing delicate comparison argument, instead of $C^{1,\alpha}$-estimates, a proper $C^{0,1}$-estimate for the {\it homogeneous} equation suffices to study the integrability of the gradient of solutions to the {\it inhomogeneous} equation. Furthermore, it has been understood that in fact, higher integrability for the gradient of solutions to the {\it homogeneous} equation will actually do a similar job; see~\cite{Kri-Min:06, DuMinSt}. This point of view will be adapted in our work.

\subsection{Gradient regularity in the nonlinear fractional case}
The theory of gradient regularity of weak solutions in the nonlinear, fractional framework has just begun. From a variational point of view, the fractional $p$-Laplace operator can be deemed as a fractional ``cousin" of the standard $p$-Laplace operator, nevertheless the sheer notion of weak solution only involves fractional derivatives, and hence the concept of ``gradient" is not built-in. Recent advances in \cite{BDLMS-1, BDLMS-2, DKLN-higherdiff} have shown that local weak solutions are in fact in higher Sobolev spaces and that the gradient of solutions indeed exists for a certain range of $s$ regardless of the initial low regularity. These works lay the foundation of investigation in the current paper. See also previous important works \cite{Brasco-Lindgren, Brasco-Lindgren-Schikorra} in this direction. 

Prior to our current work, several authors considered the fractional $p$-Poisson equation and examined the impact of the inhomogeneous term on the regularity of solutions; see~\cite{Brasco-Lindgren, Brasco-Lindgren-Schikorra, DKLN-higherdiff, DN-CZ, Lindgren-Garain}. In particular, when $s\in(0,\frac{p-1}{p}]$ the paper \cite{DN-CZ} sharpened some results of \cite{Brasco-Lindgren} and pinpointed the interplay of fractional differentiability and integrability. See also \cite{BK} with a different kind of inhomogeneity and \cite{Bass-Ren, Kuusi-Mingione-Sire-1, Kuusi-Mingione-Sire-2, Schikorra-1} for a self-improving property of Gehring type.

However, none of them dealt with the higher integrability issue at the gradient level. Therefore, our main result represents  a step forward in the direction of quantifying how the higher integrability of $f$ determines the integrability of $\nabla u$. In particular, formally taking $q=1$, Theorem~\ref{thm:main} reduces to a result of \cite{DKLN-higherdiff}. 

Finally, let us remark that notwithstanding the considerable progress made recently, the  theory of gradient regularity for the fractional $p$-Poisson equation, or even simply the homogeneous equation, is still highly malleable. Whenever a more advanced theory is available, we expect it will bear the same wealth of consequences as for the $p$-Laplacian.

\subsection{On the strategy of the proof}

The crux of reaching our main result in Theorem~\ref{thm:main} is a suitable gradient estimate for solutions to~\eqref{eq:frac-p-lap} on super-level sets of the gradient; see~\eqref{est:super-nabla-u}. Since we do not have a Lipschitz type estimate for solutions to the homogeneous equation ($f \equiv 0$) at our disposal, we rely on the $L^q$-bound, for any $q < \infty$, of the gradient of solutions to the homogeneous equation  proved in~\cite{BDLMS-1,BDLMS-2}; see Theorem~\ref{thm:Lq-gradient}.
 The gradient estimate \eqref{est:super-nabla-u} then results from a proper comparison estimate established in Proposition~\ref{lem:comparison-estimate}, which in turn exploits a higher differentiability result from~\cite{DKLN-higherdiff} stated in Theorem~\ref{thm:diening-nondifferentiabledata}. 
Departing from \eqref{est:super-nabla-u}, the desired bound in Theorem~\ref{thm:main} is then deduced via covering and truncation arguments.
In the course of the argument, we also rely on a technical estimate in Lemma~\ref{lem:tail-rho} in order to deal with the so called tail-terms that encode the nonlocal behavior of solutions. We refrain from considering coefficients in the integral kernel, but expect that under the same program the main results remain valid for coefficients of VMO type.
\subsection{Structure of the paper}
We structure the paper as follows. In \S~\ref{S:prelim} we introduce necessary notation used in the paper, notion of solution, algebraic inequalities, and fractional Sobolev spaces. Later in the same section, we also recall some known properties of solutions to \eqref{eq:frac-p-lap} and a tail estimate. In \S~\ref{S:comp-est} we present some comparison estimates which are the key in the proof of the main result. The final argument is then given in \S~\ref{S:proof}.

\section{Preliminaries}\label{S:prelim}
\subsection{Notation}\label{S:notation}
In the entire paper, $p^\prime\equiv\frac{p-1}{p}$ is the H\"older conjugate of $p$, $\N_0\equiv\N\cup\{0\}$, whereas $c$ stands for a generic constant that can change from line to line. In the respective statements and their proofs, we trace the dependencies of the constants on the data and indicate them by writing, for example, $c=c(n,p,s)$ if $c$ depends on $n,p$ and $s$. Next, we denote $B_R(x_o)\Subset \R^n$ as a ball with radius $R$ centered at $x_o\in\R^n$. If $x_o=0$ or if it is clear from the context which center is meant, we omit the center in the notation and write $B_R$ instead of $B_R(x_o)$.

Letting $\Omega\subset\R^n$ be a bounded open set, we introduce the {\bf fractional Sobolev space} $W^{\gamma,q}(\Omega,\R^k)$, with $k\in\N$, $q\in [1,\infty )$ and
$\gamma\in (0,1)$. A measurable function $w\colon\Omega\to\R^k$ will belong to the fractional Sobolev space $W^{\gamma, q}(\Omega,\R^k)$ if and only if
\begin{align*}
    \| w\|_{W^{\gamma ,q}(\Omega,\R^k)}
    &:=
    \| w\|_{L^q(\Omega,\R^k)} +
    [w]_{W^{\gamma ,q}(\Omega,\R^k)}
    <\infty.
\end{align*}
The Gagliardo semi-norm $[w]_{W^{\gamma ,q}(\Omega,\R^k)}$ is defined as 
\begin{align*}
    [w]_{W^{\gamma ,q}(\Omega,\R^k)}
    &:=
    \bigg[
    \iint_{\Omega\times\Omega}
    \frac{|w(x)-w(y)|^q}{|x-y|^{n+\gamma q}}\,\d x\d y
    \bigg]^\frac{1}{q} .
\end{align*}
If the dimension $k$ is clear from the context or if $k=1$, we omit $\R^k$ and write $[w]_{W^{\gamma ,q}(\Omega)}$ instead of $[w]_{W^{\gamma ,q}(\Omega,\R^k)}$.  In particular, if $w=\nabla v$ is the gradient of a scalar function $v\colon\Omega\to \R$, we will write $[\nabla v]_{W^{\gamma ,q}(\Omega)}$ instead of 
$[\nabla v]_{W^{\gamma ,q}(\Omega,\R^n)}$.
Some useful results concerning fractional Sobolev spaces are collected in \S ~\ref{sec:fractional};
for more information we refer to \cite{Hitchhikers-guide}.

\begin{definition}[local weak solution]\label{def:weak-sol}
Let $\Omega\subset\mathbb R^n$ be a bounded open set, $p\in(1,\infty)$, $s\in(0,1)$, and $f\in L^{p'}_{\rm loc}(\Omega)$. A function $u\in W^{s,p}_{\rm loc}(\Omega)$ is called {\bf local weak solution} to the fractional $p$-Poisson equation \eqref{eq:frac-p-lap} in $\Omega$ if and only if
\begin{equation}\label{Lebesgue-weight}
\int_{\mathbb R^n}\frac{|u(x)|^{p-1}}{(1+|x|)^{n+sp}}\, \mathrm{d}x <\infty,
\end{equation}
and
\begin{equation*}
        \iint_{\mathbb R^n\times\mathbb R^n}\frac{|u(x)-u(y)|^{p-2}(u(x)-u(y))(\varphi(x)-\varphi(y))}{|x-y|^{n+sp}}\,\mathrm{d}x \mathrm{d}y =\int_{\Omega} f \varphi \, \d x
\end{equation*}
for every $\varphi\in W^{s,p}(\Omega)$ compactly supported in $\Omega$ and extended to $0$ outside $\Omega$.
\end{definition}

Whenever $u$ satisfies the integrability condition \eqref{Lebesgue-weight}, we will say that $u$ belongs to the weighted Lebesgue space $L^{p-1}_{sp}(\R^n)$. For $x_o\in\Omega$ and $R>0$, with $B_R(x_o)\Subset\Omega$ we introduce the non-local quantity
\begin{equation}\label{Eq:tail}
    {\rm Tail}\big(u; B_R(x_o)\big)
    := 
    \bigg[R^{sp}\int_{\R^n\setminus B_R(x_o)}\frac{|u(x)|^{p-1}}{|x-x_o|^{n+sp}}\,\dx \bigg]^{\frac1{p-1}}.
\end{equation}

\subsection{Algebraic inequalities}
For $\gamma\in(0,\infty)$ and $a\in\R$ we define the \emph{signed $\gamma$-power of $a$} by
$$
    \boldsymbol{a}^\gamma
    :=
    |a|^{\gamma-1} a.
$$
If $a=0$ we set $\boldsymbol{a}^\gamma=0$ also for $\gamma\in(0,1)$. The basic algebraic inequality relating the difference $|\boldsymbol{b}^\gamma - \boldsymbol{a}^\gamma|$ to $|a-b|$ can be found in \cite[Lemma~2.1]{Acerbi-Fusco} for $\gamma\in(0,1)$, and \cite[Lemma~2.2]{GiaquintaModica:1986} for $\gamma\in(1,\infty)$. The stated values of the constants can be derived by a careful inspection of the proofs; 
cf.~\cite[Lemma~2.2]{BDLMS-1}.

\begin{lemma}\label{lem:Acerbi-Fusco}
For any $\gamma>0$, and for all $a,b\in\R$, we have
\begin{align*}
	C_1(|a| + |b|)^{\gamma-1}|b-a|
	\le
	|\boldsymbol{b}^\gamma - \boldsymbol{a}^\gamma|
	\le
	C_2(|a| + |b|)^{\gamma-1}|b-a|,
\end{align*}
where 
\begin{equation*}
    C_1
    =
    \left\{
    \begin{array}{ll}
        \gamma, & \mbox{if $\gamma\in(0,1]$,} \\[5pt]
        2^{1-\gamma}, & \mbox{if $\gamma\in[1,\infty)$,}
    \end{array}
    \right.
    \qquad
    C_2
    =
    \left\{
    \begin{array}{ll}
         2^{1-\gamma}, & \mbox{if $\gamma\in(0,1]$,} \\[5pt]
         \gamma, & \mbox{if $\gamma\in[1,\infty)$.}
    \end{array}
    \right.
\end{equation*}
\end{lemma}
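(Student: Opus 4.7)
The plan is to reduce the claimed two-sided inequality to a one-dimensional analysis via symmetry and scaling, and then to dispose of two sign configurations separately using elementary convexity arguments.

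First I would exploit three invariances: both sides are $\gamma$-homogeneous in $(a,b)$, symmetric under the swap $a\leftrightarrow b$, and invariant under the involution $(a,b)\mapsto(-a,-b)$ because $\boldsymbol{t}^\gamma$ is odd. Combining them, one may assume $|a|+|b|=1$ and $b\ge|a|\ge 0$, which leaves just two sub-cases: $a\ge 0$ (same sign as $b$) and $a\le 0$ (opposite sign).

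In the same-sign case I would start from the identity $b^\gamma-a^\gamma=\gamma\int_a^b t^{\gamma-1}\,\d t$, valid for all $\gamma>0$. The upper bound follows by estimating the integrand on $[a,b]$ by its maximum, while the lower bound for $\gamma\ge 1$ comes from the midpoint tangent inequality $b^\gamma-a^\gamma\ge\gamma((a+b)/2)^{\gamma-1}(b-a)$, a direct consequence of the convexity of $t\mapsto t^\gamma$; for $\gamma\in(0,1]$, the concavity of $t\mapsto t^\gamma$ together with the subadditivity $b^\gamma\le a^\gamma+(b-a)^\gamma$ delivers the matching bound. In the opposite-sign case, writing $|a|=-a$ one has $\boldsymbol{b}^\gamma-\boldsymbol{a}^\gamma=b^\gamma+|a|^\gamma$ and $|b-a|=b+|a|=|a|+|b|$, so the claim reduces to
\[
 C_1 \le \frac{b^\gamma+|a|^\gamma}{(b+|a|)^\gamma} \le C_2.
\]
This is the classical two-sided power-mean inequality: the ratio equals $1$ at $|a|=0$ and $2^{1-\gamma}$ at $|a|=b$, and its monotonicity in $|a|/b$ (with direction determined by whether $\gamma\gtrless 1$) supplies the envelope.

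The main, and in fact only, delicate point is the bookkeeping needed to read off the stated optimal constants in each regime. It comes down to comparing the values $1$, $\gamma$, $2^{1-\gamma}$ and $\gamma\, 2^{1-\gamma}$ produced by the extremal configurations $a=b$, $a=0$ and $a=-b$, and observing that their ordering flips at $\gamma=1$, yielding precisely the $C_1,C_2$ in the statement.
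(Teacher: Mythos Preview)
The paper does not prove this lemma; it simply cites \cite{Acerbi-Fusco}, \cite{GiaquintaModica:1986}, and \cite[Lemma~2.2]{BDLMS-1} for the inequality and the explicit constants. Your reduction by homogeneity and the odd/swap symmetries to the normalized configuration $|a|+|b|=1$, $b\ge|a|\ge0$, is clean and correct, and the opposite-sign case goes through exactly as you describe. In the same-sign case, however, two of your claims fail as stated.

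First, the ``midpoint tangent inequality'' $b^\gamma-a^\gamma\ge\gamma\big(\tfrac{a+b}{2}\big)^{\gamma-1}(b-a)$ is \emph{not} a consequence of convexity of $t\mapsto t^\gamma$, and is in fact false for $1<\gamma<2$: take $\gamma=\tfrac32$, $a=0.1$, $b=0.9$, where the left side is $\approx 0.822$ while the right is $\approx 0.849$. (The tangent at the midpoint lies below the graph at \emph{both} endpoints; subtracting two inequalities pointing in the same direction yields nothing.) What you actually need is the weaker bound $b^\gamma-a^\gamma\ge 2^{1-\gamma}(b-a)$, and this follows from $a^\gamma=a\cdot a^{\gamma-1}\le a\,b^{\gamma-1}$ together with $b\ge\tfrac{a+b}{2}=\tfrac12$. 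Second, for $\gamma\in(0,1]$ neither ``integrand max'' nor subadditivity delivers the same-sign upper bound $b^\gamma-a^\gamma\le 2^{1-\gamma}(b-a)$: with $\gamma=\tfrac12$, $a=0.4$, $b=0.6$, subadditivity gives only $(b-a)^\gamma\approx0.447$, whereas the target is $2^{1-\gamma}(b-a)\approx0.283$, so the subadditivity bound is strictly too weak here. One correct route is to check that $F(a):=(1-a)^\gamma-a^\gamma-2^{1-\gamma}(1-2a)$ is convex on $(0,\tfrac12)$ with $F(0)=1-2^{1-\gamma}\le0$ and $F(\tfrac12)=0$, hence $F\le0$ on $[0,\tfrac12]$. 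With these two repairs your argument is complete and self-contained, which is more than the paper itself provides.
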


\begin{lemma} \label{lem:elementary-superlevel}
Let $\gamma \in [1,\infty)$, $a,b \in \R^n$ and $|a| \geq K$ for some $K > 0$. Then 
for every $\alpha \in [\gamma,\infty)$
we have
\begin{equation*}
    |a|^\gamma \leq 2^\gamma |a-b|^\gamma + 2^{\alpha -1} K^{\gamma -\alpha} |b|^\alpha.
\end{equation*}
\end{lemma}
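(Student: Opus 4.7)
The plan is a case distinction based on the size of $|b|$ relative to the threshold $K/2$. The two ingredients are the triangle inequality $|a| \le |a-b| + |b|$ and, for the second case, the elementary convexity bound $(x+y)^\gamma \le 2^{\gamma-1}(x^\gamma + y^\gamma)$ valid for $\gamma \ge 1$.

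First I would handle the small-$b$ regime: if $|b| < K/2$, then the hypothesis $|a| \ge K$ yields $|a-b| \ge |a|-|b| > K/2 > |b|$, so that $|a| \le |a-b|+|b| < 2|a-b|$, which gives $|a|^\gamma \le 2^\gamma |a-b|^\gamma$. The second summand on the right-hand side of the claim is nonnegative and can simply be dropped.

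Next, in the complementary regime $|b| \ge K/2$, the convexity inequality gives
\[
|a|^\gamma \;\le\; (|a-b|+|b|)^\gamma \;\le\; 2^{\gamma-1}\bigl(|a-b|^\gamma + |b|^\gamma\bigr).
\]
Then I would upgrade the exponent on $|b|$ from $\gamma$ to $\alpha$ by means of the trivial inequality $|b|^{\gamma-\alpha} \le (K/2)^{\gamma-\alpha}$, which is valid because $\gamma-\alpha \le 0$ and $|b| \ge K/2$ (hence the base is at least as large as the threshold and the non-positive exponent reverses the comparison). This produces $|b|^\gamma \le 2^{\alpha-\gamma}K^{\gamma-\alpha}|b|^\alpha$, and substituting back yields the constant $2^{\gamma-1}\cdot 2^{\alpha-\gamma}=2^{\alpha-1}$ in front of $K^{\gamma-\alpha}|b|^\alpha$, together with $2^{\gamma-1}\le 2^\gamma$ in front of $|a-b|^\gamma$, matching the asserted inequality.

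There is no real obstacle here; the only care required is in tracking the constants so one lands exactly on $2^\gamma$ and $2^{\alpha-1}$. This is why I would use the sharp convexity constant $2^{\gamma-1}$ in the second case rather than the cruder $2^\gamma$ coming from a plain triangle inequality raised to the power $\gamma$, which would otherwise produce $2^\alpha$ and miss the stated constant by a factor of two.
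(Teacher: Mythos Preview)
Your proof is correct and follows essentially the same approach as the paper: a two-case split combined with the convexity bound $(x+y)^\gamma\le 2^{\gamma-1}(x^\gamma+y^\gamma)$ and the exponent upgrade $|b|^\gamma\le 2^{\alpha-\gamma}K^{\gamma-\alpha}|b|^\alpha$ once $|b|\ge K/2$. The only cosmetic difference is the splitting criterion---the paper splits according to whether $|b|<|a-b|$ or $|a-b|\le|b|$ (and then deduces $|b|\ge K/2$ in the second case), whereas you split directly on $|b|<K/2$ versus $|b|\ge K/2$ (and then deduce $|b|<|a-b|$ in the first case)---but the computations are the same.
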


\begin{proof} Due to the convexity of $a\mapsto |a|^\gamma$ we have
\begin{equation*} 
    |a|^\gamma\leq 2^{\gamma-1} \big( |a-b|^{\gamma} + |b|^\gamma \big).
\end{equation*}
Thus, in the case $|b| < |a-b|$ the claim is clear. In the reverse case, i.e.~$|a-b| \leq |b|$, we have by assumption
\begin{equation*}
    K \leq |a| \leq |a-b| + |b| \leq 2 |b|,
\end{equation*}
which implies
\begin{equation*}
    |b|^\gamma 
    = 
    |b|^\alpha |b|^{\gamma - \alpha} 
    \leq 
    2^{\alpha - \gamma} 
    K^{\gamma-\alpha} |b|^\alpha.
\end{equation*}
Together with the first case, this concludes the claim.
\end{proof}

\begin{lemma}\label{lem:tech}
Let $0<R_o<R_1<\infty$, $0<\vartheta<1$, $A,B\ge 0$, and $\beta \ge 0 $. Then there exists a constant  $c = c(\beta, \vartheta)$
such that whenever $\phi\colon [R_o,R_1]\to \R$  isa  non-negative bounded function satisfying
\begin{equation*}
	\phi(r_1)
	\le
	\vartheta \phi(r_2) 
    + 
    \frac{A}{(r_2-r_1)^\beta}
    + B
\end{equation*}
for all $R_o\le r_1 <r_2\le R_1$.
Then,  for any $R_o\le \varrho<r\le R_1$ we have
\begin{equation*}
	\phi(\varrho)
	\le
	c\,  
    \bigg[
    \frac{A}{(r-\varrho)^\beta} + B\bigg].
\end{equation*}
\end{lemma}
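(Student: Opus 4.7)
This is the classical iteration lemma (sometimes attributed to Giaquinta--Giusti), and the plan is to prove it by the standard geometric-iteration argument.

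The strategy is to fix $R_o \le \varrho < r \le R_1$ and build a strictly increasing sequence $(r_i)_{i\in\N_0} \subset [\varrho, r]$ with $r_0 = \varrho$ and $r_i \nearrow r$, chosen so that consecutive gaps form a convergent geometric series. Concretely, I would pick $\tau \in (0,1)$ (to be determined from $\vartheta$ and $\beta$) and set
\[
    r_{i+1} - r_i = (1-\tau)\tau^i (r - \varrho),
    \qquad i \in \N_0,
\]
so that $r_i = \varrho + (1-\tau^i)(r-\varrho)$ and $r_i \to r$. Applying the hypothesis with $r_1 = r_i$, $r_2 = r_{i+1}$ gives
\[
    \phi(r_i) \le \vartheta\, \phi(r_{i+1}) + \frac{A}{(1-\tau)^\beta \tau^{i\beta}(r-\varrho)^\beta} + B.
\]

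Next I would iterate this bound $k$ times, obtaining
\[
    \phi(\varrho) = \phi(r_0) \le \vartheta^k \phi(r_k) + \sum_{i=0}^{k-1} \vartheta^i \bigg[\frac{A}{(1-\tau)^\beta \tau^{i\beta}(r-\varrho)^\beta} + B\bigg].
\]
The key (and really only) point is to choose $\tau$ such that $\vartheta\, \tau^{-\beta} < 1$; for instance $\tau := \vartheta^{1/(2\beta)}$ works when $\beta>0$, while for $\beta=0$ any $\tau\in(0,1)$ will do and the $A$-term merges with $B$. With this choice the geometric series $\sum_{i\ge 0}(\vartheta \tau^{-\beta})^i$ converges to a constant depending only on $\vartheta$ and $\beta$, and likewise $\sum_{i\ge 0} \vartheta^i = (1-\vartheta)^{-1}$.

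Finally, I would use that $\phi$ is bounded on $[R_o, R_1]$, so $\vartheta^k \phi(r_k) \to 0$ as $k \to \infty$. Passing to the limit in the iterated inequality yields
\[
    \phi(\varrho) \le \frac{1}{1 - \vartheta \tau^{-\beta}} \cdot \frac{A}{(1-\tau)^\beta (r-\varrho)^\beta} + \frac{B}{1-\vartheta} \le c(\beta,\vartheta)\bigg[\frac{A}{(r-\varrho)^\beta} + B\bigg],
\]
which is exactly the claim. There is no genuine obstacle here; the only thing one has to be careful about is the choice of $\tau$ (and handling the trivial case $\beta=0$ separately or by convention), together with invoking boundedness of $\phi$ to discard the $\vartheta^k \phi(r_k)$ remainder in the limit.
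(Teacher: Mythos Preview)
The paper states this lemma without proof, treating it as a standard technical tool; your argument is the classical Giaquinta--Giusti iteration and is correct. There is nothing to compare against, and your handling of the choice of $\tau$, the case $\beta=0$, and the use of boundedness to kill the remainder $\vartheta^k\phi(r_k)$ is exactly what is needed.
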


\subsection{Fractional Sobolev spaces}\label{sec:fractional}
In the following, we summarize the statements about fractional Sobolev spaces that we will use throughout the paper.  We restrict ourselves to the most relevant functional estimates. 
First, we state a Gagliardo-Nirenberg type inequality, see~\cite[Theorem 1]{BM}. Let $s,\,s_1,\,s_2 \geq 0$, $\theta \in (0,1)$ and $1 \leq p,\,p_1,\,p_2 \leq \infty$. The following condition plays a central role in the statement:
\begin{equation} \label{eq:interpolation-cond}
s_2 \in \N_{\geq 1},\quad p_2 =1, \quad \text{ and } \quad s_2-s_1 \leq 1- \tfrac{1}{p_1}.
\end{equation}

\begin{lemma} \label{lem:GN-fract}
Suppose that $\Omega \subset \R^n$ is a bounded Lipschitz domain and 
\begin{equation*}
    \theta \in (0,1), \quad
    s= (1-\theta)s_1 + \theta s_2,\quad
    \tfrac1p = \tfrac{1-\theta}{p_1} + \tfrac{\theta}{p_2}.
\end{equation*}
If~\eqref{eq:interpolation-cond} fails, then there exists a constant $c = c(s_1,s_2,p_1,p_2,\theta,\Omega) >0$ such that for any $u\in 
    W^{s_1,p_1}(\Omega)
    \cap 
    W^{s_2,p_2}(\Omega)$ we have
\begin{equation*}
    \| u \|_{W^{s,p}(\Omega)} 
    \leq 
    c\, \| u \|_{W^{s_1,p_1}(\Omega)}^{1-\theta} 
    \| u \|_{W^{s_2,p_2}(\Omega)}^\theta.
\end{equation*}
\end{lemma}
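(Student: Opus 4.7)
The plan is to reduce to the case $\Omega=\R^n$ by extension, and then establish the interpolation inequality on the whole space through a Littlewood-Paley argument. Since $\Omega$ is a bounded Lipschitz domain, a Rychkov-type universal extension operator supplies a bounded linear map $E\colon W^{\sigma,q}(\Omega)\to W^{\sigma,q}(\R^n)$ acting simultaneously for the pairs $(\sigma,q)$ equal to $(s_1,p_1)$, $(s_2,p_2)$ and $(s,p)$. Combined with the trivial bound $\|u\|_{W^{s,p}(\Omega)}\le \|Eu\|_{W^{s,p}(\R^n)}$, this reduces matters to proving the stated inequality on $\R^n$ for $v=Eu$, with a constant that absorbs the norm of $E$ and hence depends on $\Omega$ only through its Lipschitz character.

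Next, I would work with a Littlewood-Paley decomposition $v=\sum_{j\geq -1}\Delta_j v$ associated with a dyadic partition of unity in frequency. Each frequency block obeys a Bernstein-type estimate of the form
\[
\|\Delta_j v\|_{L^p(\R^n)}\leq c\,\|\Delta_j v\|_{L^{p_1}(\R^n)}^{1-\theta}\,\|\Delta_j v\|_{L^{p_2}(\R^n)}^{\theta},
\]
while the $W^{\sigma,q}(\R^n)$-norm is equivalent to a weighted Besov or Triebel-Lizorkin norm of the family $(\Delta_j v)_j$ with weights $2^{j\sigma}$. Multiplying the Bernstein estimate by $2^{js}=2^{js_1(1-\theta)}\,2^{js_2\theta}$ and applying H\"older's inequality in the frequency scale then produces the desired Gagliardo-Nirenberg inequality on $\R^n$, which is the heart of the Brezis-Mironescu approach.

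The main obstacle, and the reason condition \eqref{eq:interpolation-cond} must be excluded, lies in the fact that the identification of $W^{s,p}(\R^n)$ with a Triebel-Lizorkin space breaks at certain endpoints. The configuration $s_2\in\N_{\geq 1}$, $p_2=1$ together with $s_2-s_1\leq 1-\tfrac{1}{p_1}$ corresponds precisely to the situation in which real interpolation between $W^{s_1,p_1}$ and $W^{s_2,1}$ delivers a strictly larger Besov space $B^s_{p,\infty}$ rather than $W^{s,p}$ itself, a Bourgain-Brezis-Mironescu type phenomenon; the H\"older step above then fails to close with the sharp target space. Outside this borderline case, all of the relevant spaces lie in a common Besov scale with finite fine index, and the frequency-side argument goes through with a constant depending only on $s_1,s_2,p_1,p_2,\theta$ and $\Omega$, yielding the claim.
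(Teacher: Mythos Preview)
The paper does not supply its own proof of this lemma: it is quoted verbatim from \cite[Theorem~1]{BM} (Brezis--Mironescu) and used as a black box. So there is no ``paper's proof'' to compare against, and your task was really to outline a proof of a known result.

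Your sketch is broadly in the right spirit, but it is too optimistic at the technical level. The pointwise-in-frequency inequality you call ``Bernstein-type'' is just H\"older's inequality (log-convexity of $L^p$-norms) and holds for any function, so that step is fine. The subsequent ``H\"older in the frequency scale'' step goes through cleanly only when each $W^{s_i,p_i}$ coincides with the Besov space $B^{s_i}_{p_i,p_i}$, i.e.\ when the $s_i$ are non-integers and $1<p_i<\infty$. At integer orders one has $W^{k,p}=F^k_{p,2}$ (for $1<p<\infty$), not $B^k_{p,p}$, and at $p_2=1$ the Littlewood--Paley characterization of $W^{s_2,1}$ is delicate; these are exactly the regimes in which the exclusion condition \eqref{eq:interpolation-cond} becomes relevant, and your argument as written does not explain how to bridge the gap between the Triebel--Lizorkin and Besov scales there. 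Your diagnosis of \emph{why} the endpoint fails (interpolation landing in $B^s_{p,\infty}$) is qualitatively correct, but the positive direction --- showing that outside the excluded case one still recovers $W^{s,p}$ and not a larger space --- is precisely the hard part of the Brezis--Mironescu paper and requires more than a one-line appeal to H\"older in $j$.

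In short: for the application in this paper (Corollary~\ref{cor:GN} uses $p_1=p_2=p>1$, $s_1=0$, $s_2=1+\alpha\notin\N$), your frequency argument would in fact suffice after minor cleanup, because none of the pathological endpoints arise. But as a proof of the lemma in the stated generality it is incomplete, and one should simply cite \cite{BM} as the paper does.
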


\begin{corollary}\label{cor:GN}
Let $p > 1$ and $\alpha\in(0,1)$. Then there is a constant $c=c(n,p,\alpha)>0$, so that for each $u \in W^{1+\alpha,p}(B_R)$ we have
\begin{align*}
    \| \nabla u \|_{L^p(B_R)} 
    \leq 
    \frac{c}{R} \| u \|_{L^p(B_R)} +
    c\, \| u \|_{L^p(B_R)}^{\frac{\alpha}{1+\alpha}}
    [\nabla u]_{W^{\alpha,p}(B_R)}^{\frac{1}{1+\alpha}}.
\end{align*} 
\end{corollary}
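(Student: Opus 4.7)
My plan is to apply the Gagliardo--Nirenberg inequality of Lemma~\ref{lem:GN-fract} with the interpolation choice that places $W^{1,p}$ strictly between $L^{p}$ and $W^{1+\alpha,p}$, and then rescale from the unit ball to $B_R$.

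First, I would verify that Lemma~\ref{lem:GN-fract} is applicable with parameters $s_1=0$, $s_2=1+\alpha$, $p_1=p_2=p$, and $s=1$. Solving $s=(1-\theta)s_1+\theta s_2$ yields $\theta=\frac{1}{1+\alpha}$ and $1-\theta=\frac{\alpha}{1+\alpha}$, and the identity $\frac{1}{p}=\frac{1-\theta}{p_1}+\frac{\theta}{p_2}$ is automatic. Since $\alpha\in(0,1)$, we have $s_2=1+\alpha\notin\N_{\ge 1}$, so the exceptional condition~\eqref{eq:interpolation-cond} fails and the interpolation inequality applies.

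Next, passing to the unit ball via the dilation $v(y):=u(Ry)$, the lemma gives
\[
\|v\|_{W^{1,p}(B_1)}\le c\,\|v\|_{L^{p}(B_1)}^{\alpha/(1+\alpha)}\,\|v\|_{W^{1+\alpha,p}(B_1)}^{1/(1+\alpha)}.
\]
Using the standard identification $\|v\|_{W^{1+\alpha,p}(B_1)}\le c\big(\|v\|_{L^{p}(B_1)}+\|\nabla v\|_{L^{p}(B_1)}+[\nabla v]_{W^{\alpha,p}(B_1)}\big)$ together with the subadditivity of $t\mapsto t^{1/(1+\alpha)}$ on $[0,\infty)$, I split the right-hand side into three pieces. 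The cross term $\|v\|_{L^{p}(B_1)}^{\alpha/(1+\alpha)}\|\nabla v\|_{L^{p}(B_1)}^{1/(1+\alpha)}$ is then handled by Young's inequality with exponents $\tfrac{1+\alpha}{\alpha}$ and $1+\alpha$ to reabsorb the $\|\nabla v\|_{L^{p}(B_1)}$-factor into the left-hand side, producing
\[
\|\nabla v\|_{L^{p}(B_1)}\le c\,\|v\|_{L^{p}(B_1)}+c\,\|v\|_{L^{p}(B_1)}^{\alpha/(1+\alpha)}\,[\nabla v]_{W^{\alpha,p}(B_1)}^{1/(1+\alpha)}.
\]

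Finally, the scaling relations $\|v\|_{L^{p}(B_1)}=R^{-n/p}\|u\|_{L^{p}(B_R)}$, $\|\nabla v\|_{L^{p}(B_1)}=R^{1-n/p}\|\nabla u\|_{L^{p}(B_R)}$, and $[\nabla v]_{W^{\alpha,p}(B_1)}=R^{1+\alpha-n/p}[\nabla u]_{W^{\alpha,p}(B_R)}$ transform the estimate back to $B_R$. A routine exponent count shows that the left-hand side and the nonlinear right-hand side term both carry a common factor $R^{1-n/p}$, while the pure $L^{p}$-term contributes $R^{-n/p}=R^{-1}\cdot R^{1-n/p}$; dividing out $R^{1-n/p}$ yields precisely the claimed inequality with the factor $\tfrac{1}{R}$ in front of $\|u\|_{L^{p}(B_R)}$. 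The only real care needed is keeping the scaling powers straight and confirming the failure of~\eqref{eq:interpolation-cond}; neither step poses a genuine obstacle.
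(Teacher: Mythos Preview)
Your proposal is correct and follows essentially the same route as the paper: apply Lemma~\ref{lem:GN-fract} with $s_1=0$, $s_2=1+\alpha$, $p_1=p_2=p$, $\theta=\frac{1}{1+\alpha}$, expand the $W^{1+\alpha,p}$-norm, use Young's inequality with exponents $\frac{1+\alpha}{\alpha}$ and $1+\alpha$ to reabsorb the gradient term, and obtain the $R$-dependence by scaling. The only cosmetic difference is that the paper writes the scaled inequality on $B_R$ up front and then absorbs, whereas you work on $B_1$ first and scale at the end; both are the same argument.
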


\begin{proof}
We apply the Gagliardo-Nirenberg type inequality from Lemma~\ref{lem:GN-fract} with $\theta = \frac{1}{1+\alpha}$, $p_1=p_2=p$, $s_1 = 0$, $s_2 = 1+\alpha$, and $s = 1$, and obtain
\begin{align*}
    \| \nabla u \|_{L^p(B_R)} 
    \leq 
    \frac{c}{R} \| u \|_{L^p(B_R)}^\frac{\alpha}{1+\alpha} 
    \Big[\| u \|_{L^p(B_R)} + R\| \nabla u \|_{L^p(B_R)} + R^{1+\alpha}[\nabla u]_{W^{\alpha,p}(B_R)}\Big]^\frac1{1+\alpha}.
\end{align*} 
Note that the dependence on the radius can be derived by a scaling argument, i.e., first deducing the inequality on $B_1$ and then scaling it back to $B_R$. By means of Young's inequality applied with exponents $\frac{1+\alpha}{\alpha}$ and $1+\alpha$, the second term on the right-hand side can be bounded by
$$
    \frac{c}{R^\frac{\alpha}{1+\alpha}} \| u \|_{L^p(B_R)}^\frac{\alpha}{1+\alpha} \| \nabla u \|_{L^p(B_R)}^\frac{1}{1+\alpha}
    \le 
    \tfrac12 \| \nabla u \|_{L^p(B_R)} + \frac{c}{R}\| u \|_{L^p(B_R)},
$$
so that $\tfrac12\| \nabla u \|_{L^p(B_R)}$ can be absorbed on the left-hand side.
\end{proof}
Next, we state two variants of 
Poincar\'e's inequality for functions in fractional Sobolev spaces. 
The first one is standard.
\begin{lemma}\label{lem:poin}
Let $p \geq 1$, $0<s_o \leq s < 1$ and $R > 0$. Then there is a constant $c=c(n,p,s_o)\ge 1$ such that for any 
$u \in W^{s,p}(B_R(x_o))$ we have
\begin{equation*}
    \bint_{B_R(x_o)} |u-(u)_{B_R(x_o)}|^p \, \d x 
    \leq c\, (1-s) R^{sp} \bint_{B_R(x_o)} \int_{B_R(x_o)} \frac{|u(x) - u(y)|^p}{|x-y|^{n+sp}} \, \d x \d y.
\end{equation*}
\end{lemma}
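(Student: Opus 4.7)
The plan is to carry out the classical Jensen-plus-kernel derivation of the fractional Poincar\'e inequality, then to refine the argument in order to extract the sharp $(1-s)$ factor announced in the statement.

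First, by Jensen's inequality (using convexity of $|\cdot|^p$ for $p\geq 1$),
\begin{align*}
\bint_{B_R}|u-(u)_{B_R}|^p\,\d x = \bint_{B_R}\bigg|\bint_{B_R}\bigl(u(x)-u(y)\bigr)\,\d y\bigg|^p\d x \leq \bint_{B_R}\bint_{B_R}|u(x)-u(y)|^p\,\d y\,\d x.
\end{align*}
Then I would multiply and divide by the Gagliardo kernel $|x-y|^{n+sp}$ and convert one of the $\bint$'s into a $\int$, pulling out a factor of $|B_R|^{-1} = (\omega_n R^n)^{-1}$. Bounding $|x-y|^{n+sp}\leq (2R)^{n+sp}$ pointwise on $B_R\times B_R$ then yields
\begin{align*}
\bint_{B_R}|u-(u)_{B_R}|^p\,\d x \leq \frac{2^{n+sp}}{\omega_n}\,R^{sp}\,\bint_{B_R}\int_{B_R}\frac{|u(x)-u(y)|^p}{|x-y|^{n+sp}}\,\d y\,\d x,
\end{align*}
which is the desired inequality, but with a constant of the form $c(n,p)R^{sp}$ rather than $c(n,p,s_o)(1-s)R^{sp}$.

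To convert the $c(n,p)$ factor into the sharper $c(n,p,s_o)(1-s)$, I would exploit the fact that the Gagliardo density $\frac{|u(x)-u(y)|^p}{|x-y|^{n+sp}}$ concentrates near the diagonal at rate $(1-s)^{-1}$ as $s\to 1^-$ (the Bourgain--Brezis--Mironescu asymptotic), so the pointwise bound on $|x-y|^{n+sp}$ is wasteful by precisely a factor of $(1-s)$. Quantitatively, this refinement is typically achieved by either a dyadic decomposition of $B_R\times B_R$ in the variable $|x-y|$ and careful summation of a geometric series in $s$, or a layer-cake rewriting of $|x-y|^{n+sp}$ in the radial variable followed by Fubini. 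In either approach, the lower bound $s\geq s_o$ is used only to prevent the constant from blowing up as $s\to 0^+$.

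The main obstacle is precisely this last refinement. The inequality with the weaker constant $c(n,p)$ in place of $c(n,p,s_o)(1-s)$ is an easy application of Jensen, but the $(1-s)$ version is indispensable here because the paper needs estimates that remain uniform in $s$ up to $s=1$; without $(1-s)$ the right-hand side would be inconsistent with the classical Poincar\'e inequality in the limit $s\to 1^-$, where the Gagliardo semi-norm blows up like $(1-s)^{-1}\|\nabla u\|_{L^p}^p$. Since the result is a well-known fact, details of the refinement may be taken from the standard literature on fractional Sobolev spaces, see \cite{Hitchhikers-guide}.
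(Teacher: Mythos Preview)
The paper does not supply a proof of this lemma; it is introduced with the phrase ``The first one is standard'' and used without further justification. There is therefore no argument in the paper to compare your sketch against.

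Your Jensen step is correct and delivers the inequality with a constant of the form $c(n,p)$ in place of $c(n,p,s_o)(1-s)$. Your account of how to upgrade to the sharp $(1-s)$ factor is accurate in outline---it is indeed a Bourgain--Brezis--Mironescu type refinement, and the lower bound $s\ge s_o$ is only needed to keep the constant finite as $s\to 0^+$---but you stop at naming two possible mechanisms (dyadic decomposition in $|x-y|$, or a layer-cake/Fubini rewriting) rather than executing either one. For a lemma the paper itself treats as folklore this level of detail is defensible; however, a reader wanting the $(1-s)$ constant explicitly would need to look beyond \cite{Hitchhikers-guide}, which discusses the BBM asymptotics but does not state this Poincar\'e inequality in the precise quantitative form given here.
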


The second one can be readily derived from the first one and applies to functions in $W^{s,p}(B_R)$ that vanish on a set of positive measure in $B_R$, i.e.~have a fat zero level set. It 
is taken from~\cite[Lemma 4.7]{Coz17b}. 

\begin{lemma} \label{lem:coz17b-4.7}
Let $p \geq 1$, $0<s_o \leq s < 1$,
$R > 0$, and $\gamma \in (0,1]$. Then there is a constant $c = c(n,s_o,p,\gamma) \geq 1$ such that
for any $u \in W^{s,p}(B_R)$ that satisfies $u = 0$ a.e.~on a set $\Omega_o \subset B_R$ with $|\Omega_o| \geq \gamma |B_R|$, we have
\begin{equation*}
    \int_{B_R} |u(x)|^p \, \d x 
    \leq 
    c\, (1-s) R^{sp} \iint_{B_R \times B_R} \frac{|u(x)-u(y)|^p}{|x-y|^{n+sp}} \, \d x \d y.
\end{equation*}
\end{lemma}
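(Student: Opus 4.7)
The plan is to reduce the fat-zero Poincaré inequality to the standard fractional Poincaré inequality Lemma~\ref{lem:poin}, splitting the left-hand side around the mean value $(u)_{B_R}$ and using the vanishing set $\Omega_o$ only to control the mean. First, I would write the triangle-inequality splitting
\[
\int_{B_R}|u|^p\,\d x
\le
2^{p-1}\int_{B_R}|u-(u)_{B_R}|^p\,\d x
+2^{p-1}|B_R|\,|(u)_{B_R}|^p
\]
and apply Lemma~\ref{lem:poin} to the first term on the right, which immediately gives the desired $c(1-s)R^{sp}$-times-Gagliardo bound for this piece. The problem is then reduced to estimating $|B_R|\,|(u)_{B_R}|^p$ by the same right-hand side with a constant depending only on $n,s_o,p,\gamma$.

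For the mean, the hypothesis $u\equiv 0$ a.e.~on $\Omega_o$ gives $(u)_{B_R}-u(x)=(u)_{B_R}$ for a.e.~$x\in\Omega_o$, hence
\[
|(u)_{B_R}|^p
=
\frac{1}{|\Omega_o|}\int_{\Omega_o}|(u)_{B_R}-u(x)|^p\,\d x
\le
\frac{1}{\gamma|B_R|}\int_{B_R}|u-(u)_{B_R}|^p\,\d x,
\]
where the last step uses $|\Omega_o|\ge\gamma|B_R|$ and enlarges the domain of integration. A second application of Lemma~\ref{lem:poin} bounds the right-hand side by $\frac{c(1-s)R^{sp}}{\gamma|B_R|}\iint_{B_R\times B_R}\frac{|u(x)-u(y)|^p}{|x-y|^{n+sp}}\,\d x\d y$. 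Multiplying through by $|B_R|$ and adding the two contributions yields the claim with $c=c(n,s_o,p,\gamma)$.

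The one point to keep an eye on, though it is not really an obstacle, is preserving the $(1-s)$ factor on the right-hand side. A naive approach writing $u(x)=\tfrac{1}{|\Omega_o|}\int_{\Omega_o}(u(x)-u(y))\,\d y$ and estimating $|u(x)-u(y)|^p\le(2R)^{n+sp}\frac{|u(x)-u(y)|^p}{|x-y|^{n+sp}}$ already proves some Poincaré-type inequality, but its constant is independent of $(1-s)$ and therefore strictly weaker, failing to be uniform as $s\to 1^-$. Channeling the argument twice through Lemma~\ref{lem:poin} is precisely what transfers the sharp $s$-dependence of the standard fractional Poincaré inequality to the fat-zero variant.
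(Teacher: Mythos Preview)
Your proof is correct and is precisely the derivation the paper has in mind: the text states that Lemma~\ref{lem:coz17b-4.7} ``can be readily derived from'' Lemma~\ref{lem:poin} and cites \cite[Lemma~4.7]{Coz17b} rather than writing out a proof. Your triangle-inequality splitting around $(u)_{B_R}$, use of the fat zero set to bound the mean, and two applications of Lemma~\ref{lem:poin} are exactly the intended route, and your remark about the $(1-s)$ factor is on point.
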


Finally, we recall the \emph{embedding} $W^{1,p}\hookrightarrow W^{s,p}$.

\begin{lemma} \label{lem:W1q-Wsq-embedding}
Let $p \geq 1$ and $s \in (0,1)$. Then, for any $u \in W^{1,p}(B_R)$ we have 
$$
    \iint_{B_R \times B_R} \frac{|u(x)-u(y)|^p}{|x-y|^{n+sp}} \, \d x \d y 
    \leq 
    c(n) \frac{R^{(1-s)p}}{(1-s)p} \int_{B_R} |\nabla u|^p \, \d x.
$$
\end{lemma}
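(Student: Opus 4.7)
My plan is to bound the difference $|u(x)-u(y)|$ pointwise by the line integral of $|\nabla u|$ along the segment joining $x$ and $y$, and then carefully compute the resulting double integral.

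First, by density, assume $u\in C^1(\overline{B_R})$. Since $B_R$ is convex, for $x,y\in B_R$ the segment $[x,y]$ lies in $B_R$, hence
\begin{equation*}
    |u(x)-u(y)|
    \le
    |x-y|\int_0^1 |\nabla u(x+t(y-x))|\,\d t.
\end{equation*}
Raising to the $p$-th power and applying Jensen's inequality to the inner integral yields
\begin{equation*}
    \frac{|u(x)-u(y)|^p}{|x-y|^{n+sp}}
    \le
    \frac{1}{|x-y|^{n-(1-s)p}}\int_0^1 |\nabla u(x+t(y-x))|^p\,\d t.
\end{equation*}

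Next I would integrate over $B_R\times B_R$ and perform the substitution $h=y-x$. For $x,y\in B_R$ one has $|h|\le 2R$, so after applying Fubini and swapping the order of integration this gives
\begin{equation*}
    \iint_{B_R\times B_R}\frac{|u(x)-u(y)|^p}{|x-y|^{n+sp}}\,\d x\d y
    \le
    \int_0^1\!\int_{B_{2R}(0)}\frac{1}{|h|^{n-(1-s)p}}\int_{B_R} |\nabla u(x+th)|^p\,\d x\,\d h\,\d t.
\end{equation*}
For fixed $t\in[0,1]$ and $h$ with $|h|\le 2R$, the translation $z=x+th$ maps $B_R$ into itself (again by convexity, since the segment from $x$ to $x+h=y$ remains in $B_R$), with unit Jacobian, so the innermost integral is bounded by $\int_{B_R}|\nabla u(z)|^p\,\d z$.

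Finally I would compute the remaining radial integral in $h$ explicitly:
\begin{equation*}
    \int_{B_{2R}(0)}\frac{\d h}{|h|^{n-(1-s)p}}
    =
    |\mathbb{S}^{n-1}|\int_0^{2R} r^{(1-s)p-1}\,\d r
    =
    \frac{|\mathbb{S}^{n-1}|\,(2R)^{(1-s)p}}{(1-s)p}.
\end{equation*}
Combining the three steps gives the claimed bound with constant $c(n)=2^p|\mathbb{S}^{n-1}|$, and the density argument extends it from smooth to general $u\in W^{1,p}(B_R)$. The only delicate point is justifying that both the segment $[x,y]$ and the shifted point $x+th$ remain in $B_R$, which is handled by convexity; the rest is a direct calculation with Jensen, Fubini, and a radial integral.
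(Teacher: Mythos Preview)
Your overall strategy is the standard one (the paper states this embedding without proof), and the computation is essentially right, but there is a gap in how you handle the domains after substituting $h=y-x$. Once you enlarge the $h$-integration to all of $B_{2R}(0)$ you have discarded the constraint $x+h\in B_R$, and then your convexity claim ``the translation $z=x+th$ maps $B_R$ into itself'' is simply false: take $x$ near $\partial B_R$ and $h$ pointing outward with $|h|$ close to $2R$. In fact, as written the quantity $|\nabla u(x+th)|$ on the right-hand side of your displayed inequality need not even be defined.

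The fix is easy and can be done in two ways. Either retain the constraint and write the integral as
\[
\int_0^1\int_{|h|<2R}\frac{1}{|h|^{n-(1-s)p}}\int_{B_R\cap(B_R-h)}|\nabla u(x+th)|^p\,\d x\,\d h\,\d t;
\]
now $x\in B_R$ and $x+h\in B_R$, so convexity indeed gives $x+th\in B_R$, and the substitution $z=x+th$ sends $B_R\cap(B_R-h)$ into $B_R$, bounding the inner integral by $\int_{B_R}|\nabla u|^p$. Alternatively, extend $|\nabla u|$ by zero to $\R^n$; then the enlargement to $h\in B_{2R}$ is harmless and $\int_{B_R}|\nabla u(x+th)|^p\,\d x\le\int_{\R^n}|\nabla u|^p=\int_{B_R}|\nabla u|^p$ by translation invariance, with no convexity needed. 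Either route completes your argument. One small remark: the constant you obtain is $2^{(1-s)p}|\mathbb{S}^{n-1}|$, which you bound by $2^p|\mathbb{S}^{n-1}|$; this depends on $p$ as well as $n$, so strictly one has $c=c(n,p)$ rather than $c=c(n)$.
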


\subsection{The homogeneous problem}
In this section we summarize the regularity statements of local weak solutions to the homogeneous fractional $p$-Laplace equation 
\begin{equation} \label{eq:frac-p-lap-homo}
    \left( - \Delta_p\right)^s u = 0
    \quad \mbox{in $\Omega$.}
\end{equation}
We start with the $L^\infty$-estimate which is taken from~\cite[Theorem 1.1]{DKP}; see also \cite[Theorem~6.2]{Coz17b}.

\begin{theorem}\label{thm:sup-est}
Let $p \in (1,\infty)$ and $s \in (0,1)$. Then, any local weak solution $u \in W^{s,p}_{\loc}(\Omega) \cap L^{p-1}_{sp}(\R^n)$ to the homogeneous problem~\eqref{eq:frac-p-lap-homo} is locally bounded in $\Omega$, i.e.~$u\in L^\infty_{\loc}(\Omega)$. Moreover, there exists a constant $c=c(n,p,s)$ such that for any ball $B_{2R} = B_{2R}(x_o) \Subset \Omega$ we have 
\begin{align*}
    \|u\|_{L^\infty(B_R)} 
    \le 
    \frac{c}{R^{\frac{n}{p}}} \|u\|_{L^p(B_{2R})} + c\, \Tail\big(u;B_R\big).
\end{align*}
\end{theorem}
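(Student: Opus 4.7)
My plan is to establish the $L^\infty$-bound via a De Giorgi-type iteration on the truncations $(u-k)_+$, following the standard nonlocal framework. The first step is to derive a Caccioppoli inequality on super-level sets: for concentric balls $B_\rho\subset B_r\subset B_{2R}$, a level $k\ge 0$, and a cutoff $\phi\in C_c^\infty(B_{(\rho+r)/2})$ with $\phi\equiv 1$ on $B_\rho$ and $|\nabla\phi|\lesssim (r-\rho)^{-1}$, I would test the weak formulation with $\varphi=(u-k)_+\phi^p$. Using the monotonicity inequality
\[
|u(x)-u(y)|^{p-2}(u(x)-u(y))\big(w(x)\phi(x)^p-w(y)\phi(y)^p\big)\ge c\,|w(x)-w(y)|^p\max\{\phi(x),\phi(y)\}^p-c\,(\max w)^p|\phi(x)-\phi(y)|^p
\]
(with $w=(u-k)_+$) together with the standard splitting of $\R^n\times\R^n$ into the local part $B_r\times B_r$ and the two symmetric tail parts $(\R^n\setminus B_r)\times B_r$, I would obtain
\[
\iint_{B_\rho\times B_\rho}\frac{|w(x)-w(y)|^p}{|x-y|^{n+sp}}\,\d x\d y
\;\lesssim\;
\frac{1}{(r-\rho)^{sp}}\int_{B_r}w^p\,\d x
+\Big(\sup_{x\in B_\rho}\int_{\R^n\setminus B_r}\tfrac{\d y}{|x-y|^{n+sp}}\Big)\|w\|_{L^1(B_r)}\,\mathrm{Tail}(u_+;B_R)^{p-1}.
\]
The tail factor is controlled by the assumption $u\in L^{p-1}_{sp}(\R^n)$ and by choosing the levels $k$ large enough compared to $\mathrm{Tail}(u;B_R)$.

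Second, I would invoke the fractional Sobolev embedding $W^{s,p}(B_\rho)\hookrightarrow L^{p^*_s}(B_\rho)$ with $p^*_s=\tfrac{np}{n-sp}$ if $sp<n$ (or any $p^*>p$ if $sp\ge n$, using Lemma~\ref{lem:poin}), combined with H\"older's inequality on the super-level set $\{w>0\}\cap B_\rho$. This produces the gain-of-integrability inequality
\[
\bigg(\bint_{B_\rho}w^{p^*_s}\d x\bigg)^{p/p^*_s}
\;\lesssim\;
\bigg[\frac{r^{sp}}{(r-\rho)^{sp}}+T\bigg]\bint_{B_r}w^p\d x\cdot\Big|\{w>0\}\cap B_r\Big|^{sp/n},
\]
where $T$ is a tail-type constant bounded once we assume $k\ge \mathrm{Tail}(u;B_R)$.

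Third, I would run the De Giorgi iteration. Fix $k_0$ proportional to the right-hand side of the desired bound, and set $k_j=k_0(2-2^{-j})$, $r_j=R(1+2^{-j})$, $A_j=\bint_{B_{r_j}}(u-k_j)_+^p\d x$. The iterative inequality above yields $A_{j+1}\le C\,b^j\,A_j^{1+\sigma}/k_0^{p\sigma}$ for some $b>1$, $\sigma>0$. By the standard iteration lemma, if $k_0$ is chosen so that $A_0\le c^{-1/\sigma}b^{-1/\sigma^2}k_0^p$, then $A_j\to 0$, giving $u\le 2k_0$ a.e.\ in $B_R$. Choosing
\[
k_0\;=\;\frac{c}{R^{n/p}}\|u_+\|_{L^p(B_{2R})}+c\,\mathrm{Tail}(u_+;B_R)
\]
makes both the initial condition on $A_0$ and the requirement $k_0\gtrsim\mathrm{Tail}$ valid. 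Applying the same argument to $-u$ yields the matching lower bound and completes the claimed estimate.

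The main obstacle is the careful bookkeeping in the Caccioppoli step: the cross term arising from $|u(x)-u(y)|^{p-2}(u(x)-u(y))$ evaluated between $B_\rho$ and $\R^n\setminus B_r$ is not sign-definite, and reabsorbing it into the Gagliardo semi-norm of $w$ requires an asymmetric splitting together with the elementary inequality from Lemma~\ref{lem:Acerbi-Fusco} and Young's inequality. Keeping the constants explicit there (and ensuring the tail contribution ultimately depends only on $\mathrm{Tail}(u;B_R)$ and not on a larger ball) is the most delicate point; everything else is a routine fractional De Giorgi scheme.
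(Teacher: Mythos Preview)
The paper does not supply its own proof of this statement: it is quoted verbatim from \cite[Theorem~1.1]{DKP} (see also \cite[Theorem~6.2]{Coz17b}), with no argument given beyond the citation. Your De~Giorgi iteration sketch is precisely the method carried out in those references---Caccioppoli inequality for truncations via the test function $(u-k)_+\phi^p$, fractional Sobolev embedding, and geometric convergence of the level-set integrals---so your approach is correct and coincides with the proof in the cited sources; there is nothing further to compare.
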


Furthermore, the weak gradient exists and is locally in $L^q$ for every $q < \infty$ as has been shown in~\cite[Theorem 1.4]{BDLMS-1} and \cite[Theorem~1.2]{BDLMS-2}.

\begin{theorem} \label{thm:Lq-gradient}
Let $p\in(1,\infty)$ and $s \in (\frac{(p-2)_+}{p},1)$. Then, for any local weak solution $u \in W^{s,p}_{\loc}(\Omega) \cap L^{p-1}_{sp}(\R^n)$ to the homogeneous problem~\eqref{eq:frac-p-lap-homo} we have 
$$
u \in W^{1,q}_{\loc}(\Omega) \quad \text{ for any } q \in [p,\infty).
$$
Moreover, there exists a constant $c =  c(n,p,s,q)$ such that for any ball $B_{2R} \equiv B_{2R}(x_o) \Subset \Omega$ we have
$$
    \| \nabla u \|_{L^q(B_{R})} 
    \leq 
    c\, R^{\frac{n}{q}-1} \left[ R^{1- \frac{n}{p}} 
    \|\nabla u\|_{L^{p}(B_{2R})} + \mathrm{Tail} \big(u-(u)_{B_{2R}};B_{2R}\big) \right].
$$
\end{theorem}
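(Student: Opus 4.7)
The plan is to establish the $L^q$-gradient estimate for the homogeneous fractional $p$-Laplace equation by a two-step bootstrap: first produce a weak gradient in $L^p_{\loc}$, then iteratively improve its integrability to $L^q_{\loc}$ for any $q<\infty$.

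\textbf{Step 1 (existence of the weak gradient).} Starting from $u\in W^{s,p}_{\loc}$, I would iterate difference-quotient estimates applied to the Euler--Lagrange equation. Testing the equation with $\eta^p\,\tau_h u$ (with cutoff $\eta$), and controlling the resulting nonlinear form from below via the algebraic bound of Lemma~\ref{lem:Acerbi-Fusco}, produces a differentiability gain of the schematic form: if $u\in W^{t,p}_{\loc}$ for some $s\le t <1$, then $u\in W^{t+\delta,p}_{\loc}$ for some $\delta = \delta(n,p,s,t)>0$. The hypothesis $s>\frac{(p-2)_+}{p}$ is exactly what keeps $\delta$ bounded below as $t\uparrow 1^-$, so the iteration can cross the threshold $t=1$. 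The Gagliardo--Nirenberg interpolation of Corollary~\ref{cor:GN} then converts a little extra differentiability $W^{1+\alpha,p}_{\loc}$ into an $L^p$-bound on the gradient in terms of the Gagliardo seminorm of $u$.

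\textbf{Step 2 (bootstrap to $L^q$).} Once the weak gradient exists, I would differentiate the equation (in the sense of discrete differences $\tau_h$) and test with a nonlinear function of $\tau_h u$, typically of the form $|\tau_h u|^{\gamma-2}\tau_h u\,\eta^p$ for exponents $\gamma \ge p$. Monotonicity of the fractional $p$-Laplacian, combined with fractional Sobolev embedding, yields a reverse-H\"older-type inequality on nested balls with a fixed improvement factor $\sigma = \sigma(n,p,s)>1$: roughly, $L^\gamma$ control on $\nabla u$ self-improves to $L^{\gamma\sigma}$ control. Finitely many iterations, whose number depends only on $q$, suffice to reach any prescribed $q<\infty$, producing the quantitative bound with constant $c=c(n,p,s,q)$.

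\textbf{Step 3 (tail handling and absorption).} Throughout both steps I would replace $u$ by $u-(u)_{B_{2R}}$, which leaves the equation invariant and renders $\Tail$ in its natural shifted form. Poincar\'e's inequality of Lemma~\ref{lem:poin} trades local Gagliardo seminorms for $L^p$-norms of the gradient, while the $L^\infty$-estimate of Theorem~\ref{thm:sup-est}, applied after truncation, is the natural tool to dominate nonlocal contributions on $\R^n\setminus B_{2R}$ by $\Tail(u-(u)_{B_{2R}};B_{2R})$. At each iteration step the scale-invariant terms generated on slightly larger balls are absorbed using Lemma~\ref{lem:tech} on a sequence $B_\varrho \subset B_r\subset B_{2R}$.

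\textbf{Main obstacle.} The decisive difficulty lies in Step~1: producing a weak gradient from merely fractional regularity. The iteration of the differentiability gain $\delta$ must be controlled carefully because $\delta$ threatens to degenerate as $t \uparrow 1$; the lower bound $s>\frac{(p-2)_+}{p}$ is precisely the borderline at which the iteration can be pushed past $t=1$. Once this is achieved, Step~2 is a technically involved but conceptually classical Moser-type iteration, and Step~3 is a matter of careful bookkeeping of nonlocal contributions.
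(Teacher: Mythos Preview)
Your proposal and the paper's proof are doing very different things. The paper does \emph{not} prove this theorem from scratch: it simply quotes the qualitative $W^{1,q}_{\loc}$ regularity and a quantitative estimate directly from \cite[Theorem~1.4]{BDLMS-1} (case $p\ge 2$) and \cite[Theorem~1.2]{BDLMS-2} (case $1<p<2$), applied to $w=u-(u)_{B_{2R}}$ after invoking Theorem~\ref{thm:sup-est} for local boundedness. The remaining work in the paper's proof is purely cosmetic post-processing of the right-hand side of the cited estimate: (i) the Gagliardo seminorm $[u]_{W^{s,p}(B_{3R/2})}$ is bounded by $\|\nabla u\|_{L^p}$ via Lemma~\ref{lem:W1q-Wsq-embedding}; (ii) the $L^\infty$-norm of $w$ is bounded via Theorem~\ref{thm:sup-est} and then Poincar\'e; (iii) the tail on $B_{3R/2}$ is split into the tail on $B_{2R}$ plus an annular integral, the latter handled by H\"older and Poincar\'e.

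What you outline is essentially a roadmap for the \emph{contents} of the cited papers \cite{BDLMS-1,BDLMS-2} themselves: a difference-quotient iteration to gain differentiability past order $1$, followed by a Moser-type integrability bootstrap on the gradient. That program is broadly correct in spirit and is indeed how those references proceed, so there is no conceptual gap. But it is a vastly larger undertaking than what the present paper requires, and your sketch underplays the technical length of both steps (in particular, the iteration in Step~2 in \cite{BDLMS-1,BDLMS-2} is not a single reverse-H\"older self-improvement but a carefully tracked scheme on nested balls with explicit exponent bookkeeping, and the role of the $L^\infty$-bound is more than ``after truncation''). For the purposes of this paper, the intended ``proof'' is a black-box citation plus the three reformulation steps above; your Step~3 overlaps with that reformulation, but Steps~1 and~2 go far beyond what is needed here.
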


\begin{proof}
From Theorem~\ref{thm:sup-est} we know that $u$ is locally bounded. Next, we note that $w:=u-(u)_{B_{2R}}$ is also a locally bounded, local weak solution to~\eqref{eq:frac-p-lap-homo}.
This allows us to apply \cite[Theorem 1.4]{BDLMS-1} when $p\ge 2$, respectively \cite[Theorem~1.2]{BDLMS-2} when $p\in(1,2)$ to infer that $u \in W^{1,q}_{\loc}(\Omega)$ for any $q \in [p,\infty)$.
 Moreover, there exists a constant $c =  c(n,p,s,q)$ such that for any ball $B_{2R} \equiv B_{2R}(x_o) \Subset \Omega$ we have
\begin{align*}
    \| \nabla u \|_{L^q(B_{R})} 
    &\leq 
    c\, R^{\frac{n}{q}-1} \Big[ R^{s- \frac{n}{p}} [u]_{W^{s,p}(B_{\frac32 R})} + \|w\|_{L^\infty(B_{\frac32 R})} + 
    \mathrm{Tail} \big(w;B_{\frac32R}\big) \Big]. 
\end{align*}
The first term on the right-hand side can be estimated by
Lemma~\ref{lem:W1q-Wsq-embedding}, while the $L^\infty$-norm of $w=u-(u)_{B_{2R}}$  can be bounded by Theorem~\ref{thm:sup-est}.
The latter leads to
\begin{align*}
    \|u-(u)_{B_{2R}}\|_{L^\infty(B_{\frac32 R})}
    &\le
     \frac{c}{R^{\frac{n}{p}}} \|u-(u)_{B_{2R}}\|_{L^p(B_{2R})} + c\Tail\big(u-(u)_{B_{2R}};B_{\frac32R}\big)\\
     &\le
     c\, R^{1 - \frac{n}{p}} \|\nabla u\|_{L^{p}(B_{2R})} 
     +
     c
    \Tail\big(u-(u)_{B_{2R}};B_{\frac32 R}\big).
\end{align*}
To obtain the second line we used Poincar\'e's inequality. Therefore it remains
to control the tail term. This can be achieved by first decomposing the domain of integration into $\R^n\setminus B_{2R}$ and
$B_{2R}\setminus B_{\frac32 R}$.
On the latter we have $|x-x_o|\ge \frac32 R$, which allows to bound the kernel from above.
This leads to
\begin{align*}
    \mathrm{Tail} \big(w;B_{\frac32R}\big)^{p-1}
    &=
    \Big(\frac{3}{4}\Big)^{sp}\mathrm{Tail} \big(w;B_{2R}\big)^{p-1} +
    \Big(\frac{3R}{2}\Big)^{sp} \int_{B_{2R}\setminus B_{\frac{3}{2}R}} \frac{|w|^{p-1}}{|x-x_o|^{n+sp}} \,\d x \\
    &\le 
    \mathrm{Tail} \big(w;B_{2R}\big)^{p-1} +
    \frac{1}{R^n} \int_{B_{2R}} |w|^{p-1} \,\d x.
\end{align*}
Subsequently, we apply Hölder's and Poincar\'e's inequalities to the second integral on the right-hand side.  Combining the various inequalities provides the claimed local $L^q$-gradient estimate. 
\end{proof}

\subsection{The inhomogeneous problem}

We recall the higher differentiability result for weak solutions to the fractional $p$-Poisson equation~\eqref{eq:frac-p-lap} from~\cite[Theorem 1.2]{DKLN-higherdiff}. Here, we formulate the statement tailored to our needs. Note that our $\alpha_o$ defined in \eqref{def:alpha-o} is actually $\alpha_o-1$ from \cite{DKLN-higherdiff}.

\begin{theorem} \label{thm:diening-nondifferentiabledata}
Let $p\in(1,\infty)$, $s \in (0,1)$ with $sp'>1$, and
\begin{equation}\label{def:alpha-o}
    \alpha_o:=
    (sp'-1)\min\{1,p-1\}\in(0,1).
\end{equation}
Then, for any local weak solution $u \in W^{s,p}_{\loc}(\Omega) \cap L^{p-1}_{sp}(\R^n)$ to the fractional $p$-Poisson equation~\eqref{eq:frac-p-lap} in the sense of Definition \ref{def:weak-sol} with $f \in L^{p'}_{\loc}(\Omega)$ we have 
\begin{equation*}
     u\in W^{1+\alpha,p}_{\loc}(\Omega)
     \quad\mbox{for any $\alpha\in(0,\alpha_o)$.}
\end{equation*}
Moreover, there exists a constant $c =  c(n,p,s,\alpha)$ such that 
\begin{align*}
    R^{\alpha+1 - \frac{n}{p}} [\nabla u]_{W^{\alpha, p}(B_R)}
    &\leq 
    c \bigg[ 
    \bint_{B_{2R}} \big|
    u - (u)_{B_{2R}}\big|^p \, \d x \bigg]^\frac{1}{p}\\
    &\phantom{\le\,}+ 
    c\,\mathrm{Tail} \big(u-(u)_{B_{2R}}; B_{2R}\big)  +
    c\, R^{\frac{sp}{p-1} - \frac{n}{p}} \|f\|_{L^{p'}(B_{2R})}^\frac{1}{p-1}
\end{align*}
for any ball $B_{2R} \equiv B_{2R}(x_o) \Subset \Omega$. 
If $u \in W^{s,p}(B_{2R})$ will hold, then $B_{2R} \subset \Omega$ is sufficient.
\end{theorem}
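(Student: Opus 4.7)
The plan is to run the standard finite-difference bootstrap strategy for higher differentiability of fractional $p$-Laplace type equations, adapted to accommodate the inhomogeneity. Working with the discrete differences $\delta_h u(x) := u(x+h) - u(x)$ and translations $\tau_h u$, I would exploit the fact that $\tau_h u$ solves the translated equation with right-hand side $\tau_h f$. Testing the resulting "difference equation" against $\eta^p\, \delta_h u$ for a smooth cutoff $\eta$ supported in $B_R$, and invoking the algebraic monotonicity encoded in Lemma~\ref{lem:Acerbi-Fusco} (applied with $\gamma = p-1$), I expect to obtain a Caccioppoli-type estimate of the schematic form
\begin{align*}
 \iint_{B_\varrho \times B_\varrho}
 \frac{|\delta_h u(x)-\delta_h u(y)|^p}{|x-y|^{n+sp}}\,\d x\d y
 &\lesssim
 \frac{1}{(R-\varrho)^{sp}}\,[u]^p_{W^{s,p}(B_R)} \\
 &\quad + |h|^{\sigma}\,\|f\|_{L^{p'}(B_R)}^{\tau} + \text{tail},
\end{align*}
valid in the superquadratic case $p \geq 2$, with a degenerate weighted analogue when $p \in (1,2)$.

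Next I would translate this finite-difference bound into a Nikolskii, hence fractional Sobolev, gain, producing $u \in W^{s+\sigma_o,p}_{\loc}$ for an explicit $\sigma_o > 0$, and iterate. Each cycle upgrades the fractional differentiability of $u$ by a controlled amount, and the integrability hypothesis $f \in L^{p'}_{\loc}$ provides the saturation: balancing the contribution from the $f$-term against the fractional-difference bookkeeping pins the limiting exponent at $\alpha_o = (sp'-1)\min\{1,p-1\}$, the factor $\min\{1,p-1\}$ encoding the sub- versus superquadratic dichotomy.

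Once the iterated regularity exponent surpasses $1$, the embedding $W^{1+\beta, p} \hookrightarrow W^{1,p}$ together with a final difference-quotient argument applied at the gradient level (complemented by the Gagliardo--Nirenberg type Corollary~\ref{cor:GN}) converts fractional regularity of $u$ of order greater than $1$ into the claimed bound on $[\nabla u]_{W^{\alpha,p}}$. Tail terms are handled at each step by splitting $\R^n = B_{2R} \cup (\R^n\setminus B_{2R})$ and using \eqref{Lebesgue-weight} to absorb the outer contribution into the $\mathrm{Tail}$ functional; the telescoping over concentric balls is closed by the iteration Lemma~\ref{lem:tech}, which produces the precise $R$-powers in the final estimate.

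The main obstacle is the subquadratic case $p \in (1,2)$. There the monotonicity inequality yields an integrand weighted by a negative power of $|u(x)-u(y)| + |\tau_h u(x) - \tau_h u(y)|$, which degenerates precisely where the solution oscillates. Converting this weighted $L^2$-type quantity back into an honest $L^p$ gain requires a careful Hölder interpolation and is the source of the $\min\{1,p-1\}$ correction in $\alpha_o$. A secondary difficulty is the precise accounting of $R$-powers interwoven with the nonlocal tail across the iteration, which forces the argument to proceed on a shrinking sequence of balls and to rely on controlled absorption at each step.
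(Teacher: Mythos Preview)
The paper does not prove this theorem; it is quoted verbatim as \cite[Theorem~1.2]{DKLN-higherdiff} and used as a black box. There is therefore no ``paper's own proof'' to compare your proposal against.

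That said, your sketch is a fair high-level description of the finite-difference bootstrap that underlies the cited result, and it is consistent with how the present paper uses the theorem (cf.\ the proof of Corollary~\ref{Cor:Lp}, which extracts the $L^p$-gradient bound from the second-order difference estimates of \cite{DKLN-higherdiff}). If your aim was only to indicate why Theorem~\ref{thm:diening-nondifferentiabledata} is plausible, the outline is adequate; if you intend to actually supply a self-contained proof, be aware that the iteration in \cite{DKLN-higherdiff} is rather more delicate than your sketch suggests --- in particular, the test function is not simply $\eta^p\delta_h u$ but involves testing against differences of signed powers, and the subquadratic case requires a nontrivial interpolation step (your description of the $\min\{1,p-1\}$ mechanism is correct in spirit but glosses over the actual implementation).
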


 A quantitative estimate of the $L^p$-norm of the gradient is not explicitly provided in \cite{DKLN-higherdiff}. However, this can easily be derived from the estimates of the second finite differences given there.

\begin{corollary}\label{Cor:Lp}
Under the assumptions of Theorem~\ref{thm:diening-nondifferentiabledata} we have with a constant $c=c(n,p,s)$ for any ball $B_{2R} \equiv B_{2R}(x_o) \Subset \Omega$ the quantitative local $L^p$-gradient estimate 
\begin{align*}
    \|\nabla u\|_{L^p(B_R)}
    \le 
    \frac{c}{R}\Big[ R^{s}[u]_{W^{s,p}(B_{2R})} +
    R^\frac{n}{p}\mathrm{Tail}\big(u-(u)_{B_{2R}}; B_{2R}\big) + R^{sp'}\|f\|^{\frac{1}{p-1}}_{L^{p'}(B_{2R})}\Big].
\end{align*}
\end{corollary}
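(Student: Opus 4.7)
The plan is to interpolate the Gagliardo--Nirenberg inequality from Corollary~\ref{cor:GN} against the higher-differentiability estimate from Theorem~\ref{thm:diening-nondifferentiabledata}. Fix once and for all an arbitrary $\alpha\in(0,\alpha_o)$, say $\alpha=\alpha_o/2$, so that all constants below depend only on $n,p,s$.

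First, set $v:=u-(u)_{B_{2R}}$. Because $\nabla v=\nabla u$ and $[\nabla v]_{W^{\alpha,p}(B_R)}=[\nabla u]_{W^{\alpha,p}(B_R)}$, Corollary~\ref{cor:GN} applied to $v$ on $B_R$ yields
\begin{align*}
    \|\nabla u\|_{L^p(B_R)}
    \le
    \frac{c}{R}\|v\|_{L^p(B_R)}
    +
    c\,\|v\|_{L^p(B_R)}^{\frac{\alpha}{1+\alpha}}
    [\nabla u]_{W^{\alpha,p}(B_R)}^{\frac{1}{1+\alpha}}.
\end{align*}

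Second, I control the two pieces on the right-hand side. The fractional Poincar\'e inequality (Lemma~\ref{lem:poin}) applied on $B_{2R}$ gives
\begin{equation*}
    \|v\|_{L^p(B_R)}
    \le
    \|v\|_{L^p(B_{2R})}
    \le
    c\,R^{s}\,[u]_{W^{s,p}(B_{2R})}.
\end{equation*}
Using this same bound inside Theorem~\ref{thm:diening-nondifferentiabledata} to estimate the averaged $L^p$-integral, I rearrange the theorem's estimate into
\begin{align*}
    [\nabla u]_{W^{\alpha,p}(B_R)}
    \le
    \frac{c}{R^{1+\alpha}}
    \Big[R^{s}[u]_{W^{s,p}(B_{2R})}
    + R^{\frac{n}{p}}\,\mathrm{Tail}\big(v; B_{2R}\big)
    + R^{sp'}\|f\|_{L^{p'}(B_{2R})}^{\frac{1}{p-1}}\Big].
\end{align*}
Call the bracket on the right $\Sigma$, so $\|v\|_{L^p(B_R)}\le c\,R^{s}[u]_{W^{s,p}(B_{2R})}\le c\,\Sigma$ and $[\nabla u]_{W^{\alpha,p}(B_R)}\le c\,R^{-1-\alpha}\Sigma$.

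Third, I substitute these bounds back. The second summand from the Gagliardo--Nirenberg step becomes
\begin{equation*}
    c\,\|v\|_{L^p(B_R)}^{\frac{\alpha}{1+\alpha}}
    [\nabla u]_{W^{\alpha,p}(B_R)}^{\frac{1}{1+\alpha}}
    \le
    c\,\Sigma^{\frac{\alpha}{1+\alpha}}\,
    \bigl(R^{-1-\alpha}\Sigma\bigr)^{\frac{1}{1+\alpha}}
    = \frac{c}{R}\,\Sigma,
\end{equation*}
since the exponents of $\Sigma$ sum to one. Together with the bound $\frac{c}{R}\|v\|_{L^p(B_R)}\le \frac{c}{R}\Sigma$ for the first summand, this produces exactly
\begin{equation*}
    \|\nabla u\|_{L^p(B_R)}
    \le
    \frac{c}{R}\,\Sigma,
\end{equation*}
which is the claimed estimate.

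I do not expect any genuine obstacle here: the construction is a direct chaining of three results from the excerpt (Corollary~\ref{cor:GN}, Lemma~\ref{lem:poin}, Theorem~\ref{thm:diening-nondifferentiabledata}), and the only algebraic point to watch is that the exponents $\frac{\alpha}{1+\alpha}$ and $\frac{1}{1+\alpha}$ add up to $1$, so no Young-inequality absorption is required; the apparent mixing of norms collapses cleanly once one passes through the common bound $\Sigma$.
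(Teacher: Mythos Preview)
Your argument is correct and, in fact, cleaner than the route the paper takes. The paper does not use Corollary~\ref{cor:GN} here at all; instead it goes back into the reference \cite{DKLN-higherdiff}, extracts the second-order finite-difference bounds that underlie Theorem~\ref{thm:diening-nondifferentiabledata} (treating $p\ge 2$ and $1<p<2$ separately), converts them to first-order difference bounds via \cite[Lemma~2.17]{BDLMS-1}, and then passes to $\|\nabla u\|_{L^p}$ through the classical difference-quotient characterization, finishing with a covering argument to go from $B_{\frac12 R},B_{10R}$ to $B_R,B_{2R}$. Your approach instead treats Theorem~\ref{thm:diening-nondifferentiabledata} as a black box and interpolates its $W^{1+\alpha,p}$ output against $L^p$ via the Gagliardo--Nirenberg inequality; the choice $\alpha=\alpha_o/2$ freezes the dependence of all constants on $n,p,s$, and the exponents $\frac{\alpha}{1+\alpha}+\frac{1}{1+\alpha}=1$ make the algebra collapse without any absorption step. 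The upshot is that your proof stays entirely within the results already stated in the paper and avoids reopening \cite{DKLN-higherdiff}; the paper's proof, by contrast, is more self-contained with respect to \cite{DKLN-higherdiff} (it does not rely on the packaged $W^{1+\alpha,p}$ estimate) and sidesteps the interpolation lemma, at the cost of a case distinction and a covering argument.
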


\begin{proof}
By $\boldsymbol\tau_h u(x):=u(x+h)-u(x)$ we denote the finite difference of $u$ in direction $h\in\R^n\setminus\{0\}$. In the super-quadratic case $p\ge 2$ we apply  after re-scaling
\cite[Lemma 4.5]{DKLN-higherdiff} to $w:=u-(u)_{B_{10R}}$ and conclude that 
\begin{align*}
    \int_{B_{\frac12 R}} \big|\boldsymbol \tau_h \boldsymbol \tau_h w\big|^p \,\dx 
    &\le 
    c\Big(\frac{|h|}{R}\Big)^{s p p'} 
    \Big[ R^{sp}[w]^p_{W^{s,p}(B_{5R})} + \|w\|^p_{L^p(B_{5R})}\\
    &\qquad\qquad\qquad + 
    R^{n}\mathrm{Tail}(w; B_{5R})^p + R^{spp'}\|f\|^{p'}_{L^{p'}(B_{5R})}\Big] ,
\end{align*}
for  any $h\in\R^n\setminus\{0\}$ with $|h|\le \frac{1}{1000}R$.
Recalling $sp'>1$ and using \cite[Chapter 5]{Stein} (see  \cite[Lemma 2.17, (2.6)]{BDLMS-1} for the precise statement), it follows 
\begin{align*}
    \int_{B_{\frac12 R}} \big|\boldsymbol\tau_h w\big|^p \,\dx 
    &\le 
    c\Big(\frac{|h|}{R}\Big)^{p} 
    \Big[ R^{sp}[w]^p_{W^{s,p}(B_{5R})} + 
    \|w\|^p_{L^p(B_{5R})} \\
    &\phantom{\le\, 
    c\Big(\frac{|h|}{R}\Big)^{p} 
    \Big[} +
    R^n\mathrm{Tail}(w; B_{5R})^p + 
    R^{spp'}\|f\|^{p'}_{L^{p'}(B_{5R})}\Big]
\end{align*}
for  any $h\in\R^n\setminus\{0\}$ with $|h|\le \frac{1}{2000}R$.

When it comes to the sub-quadratic case $1<p<2$, we use the estimate following \cite[(4.28)]{DKLN-higherdiff} for the second order finite difference $\boldsymbol \tau_h \boldsymbol \tau_h w$. The power of $|h|$ should be $\frac12 (\alpha+1+\alpha_o)$ in our notation. Subsequently, we take $\alpha=1$, define $\widetilde\gamma=\frac1{2-p}(1+\frac{\alpha_o}{2}-sp)$, and use \cite[(4.28)]{DKLN-higherdiff} to estimate $\|w\|^p_{W^{\widetilde\gamma,p}(B_5)}$  on the right-hand side. This procedure yields
\begin{align*}
    \int_{B_{\frac12 R}} &\big|\boldsymbol \tau_h\boldsymbol\tau_h w\big|^p \,\dx \\
    &\le 
    c\Big(\frac{|h|}{R}\Big)^{\gamma p} 
    \Big[ \|w\|^p_{L^p(B_{10R})} + R^n\mathrm{Tail}\big(w; B_{10R}\big)^p + R^{spp'}\|f\|^{p'}_{L^{p'}(B_{10R})}\Big]
\end{align*}
for  any $h\in\R^n\setminus\{0\}$ with $|h|\le \frac{1}{1000}R$, 
where $\gamma =1+\tfrac12 \alpha_o>1$ and $\alpha_o$ is defined in \eqref{def:alpha-o}. 
Then, \cite[Lemma 2.17, (2.6)]{BDLMS-1} allows one to conclude 
\begin{align*}
    \int_{B_{\frac12R}} &\big|\boldsymbol \tau_h w\big|^p \,\dx \\
   & \le 
    c\Big(\frac{|h|}{R}\Big)^{p} 
    \Big[ \|w\|^p_{L^p(B_{10R})} + R^n\mathrm{Tail}\big(w; B_{10R}\big)^p + R^{spp'}\|f\|^{p'}_{L^{p'}(B_{10R})}\Big]
\end{align*}
for  any $h\in\R^n\setminus\{0\}$ with $|h|\le \frac{1}{2000}R$.
Summarizing the two cases, we end up with a unified estimate for all $p>1$:
\begin{align*}
    \int_{B_{\frac12R}} & \big|\boldsymbol \tau_h w
    \big|^p \,\dx \\ 
    &\le 
    c\Big(\frac{|h|}{R}\Big)^{p} 
    \Big[ R^{sp}[w]^p_{W^{s,p}(B_{10R})} + R^n\mathrm{Tail}\big(w; B_{10R}\big)^p + R^{spp'}\|f\|^{p'}_{L^{p'}(B_{10R})}\Big]
\end{align*}
for  any $h\in\R^n\setminus\{0\}$ with $|h|\le \frac{1}{2000}R$. 
Note that in the case $p\ge2$ we enlarged the domain of integration from $B_{5R}$ to $B_{10R}$ and estimated $\mathrm{Tail}(w; B_{5R})$ in terms  of $\mathrm{Tail}(w; B_{10R})+\|w\|_{L^p(B_{10R})}$. Subsequently, we applied Poincar\'e's inequality from Lemma~\ref{lem:poin} to bound $\|w\|_{L^p(B_{10R})}$ by the Gagliardo semi-norm $[w]_{W^{s,p}(B_{10R})}$.
In turn, a standard estimate for difference quotients ensures 
\begin{align*}
    \int_{B_{\frac12R}} |\nabla w|^p \,\dx 
    \le 
    \frac{c}{R^p}\Big[ R^{sp}[w]^p_{W^{s,p}(B_{10R})} + R^n\mathrm{Tail}\big(w; B_{10R}\big)^p + R^{spp'}\|f\|^{p'}_{L^{p'}(B_{10R})}\Big].
\end{align*}
Since $\nabla w=\nabla u$ and $[w]_{W^{s,p}(B_{10R})}=[u]_{W^{s,p}(B_{10R})}$ we obtain the desired estimate on $B_{\frac12R}$, $B_{10R}$ instead of $B_R$, $B_{2R}$.
Via a standard covering argument we then obtain the claimed gradient estimate on $B_R$, $B_{2R}$. 
\end{proof}

\begin{remark}\upshape
In averaged form and with integrals written out instead of norms 
the inequality of the corollary is as follows
\begin{align*}
    \mint_{B_R}&|\nabla u|^p\,
    \d x\\
    &\le \frac{c}{R^{(1-s)p}}
    \mint_{B_{2R}}\int_{B_{2R}}
    \frac{|u(x)-u(y)|^p}{|x-y|^{n+sp}}\,\d x\d y
    +
    \frac{c}{R^p}
    \mathrm{Tail}\big(u-(u)_{B_{2R}}; B_{2R}\big)^p\\
    &\phantom{\le\,}+
    \frac{c}{R^{(1-s)p}}\mint_{B_{2R}}
    |R^sf|^{p'}\,\d x\\
    &\le
    \frac{c}{R^{(1-s)p}}
    \mint_{B_{2R}}\int_{B_{2R}}
    \frac{|u(x)-u(y)|^p}{|x-y|^{n+sp}}\,\d x\d y
    +
    \frac{c}{R^p}
    \mathrm{Tail}\big(u-(u)_{B_{2R}}; B_{2R}\big)^p\\
    &\phantom{\le\,}+
    \frac{c}{R^{(1-s)p}}\bigg[\mint_{B_{2R}}
    |R^sf|^{qp'}\,\d x\bigg]^\frac1q.
\end{align*}
To obtain the last line, we have used H\"older's inequality. The preceding inequality  allows us to replace the $L^p$-integral of $\nabla u$ by the fractional Gagliardo semi-norm of $u$ in the main theorem as stated in Remark~\ref{rem:mainth}. \hfill $\Box$
\end{remark}

In the proof of the Calder\'on \& Zygmund  estimate we first consider the case $R=1$ and $x_o=0$, and prove the result on $B_1, B_2$. The general case is then obtained using a scaling argument based on the following simple observation.  

\begin{lemma}[Scaling] \label{lem:equation-scaling}
Let $p\in(1,\infty)$ and $s \in (0,1)$. Suppose $f \in L^{p'}(B_R(x_o))$ and $w \in W^{s,p}(B_R(x_o)) \cap L^{p-1}_{sp} (\R^n)$ is a weak solution to the fractional $p$-Poisson equation
\begin{equation*}
    (- \Delta_p)^s w = f\quad \mbox{ in $B_R(x_o)$.}
\end{equation*}
Then, $w_R(x) := \frac{w(x_o+Rx)}{R^s} \in W^{s,p}(B_1(0)) \cap L^{p-1}_{sp} (\R^n)$ is a weak solution to the
$p$-Poisson equation in $B_1(0)$ with inhomogeneity 
$f_R(x) := R^s f(x_o+Rx) \in L^{p'}(B_1(0))$, i.e.~we have
$$
(- \Delta_p)^s w_R = f_R \quad \mbox{in $B_1(0)$.}
$$
\end{lemma}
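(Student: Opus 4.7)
The plan is to reduce the claim to a direct change of variables in the weak formulation; no nontrivial estimate is required, only careful bookkeeping of the scaling factor $R$.

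First, given any test function $\psi \in W^{s,p}(B_1(0))$ compactly supported in $B_1(0)$ (extended by $0$), I would define a corresponding test function on $B_R(x_o)$ by $\varphi(x) := R^s \psi\big(R^{-1}(x-x_o)\big)$. A change of variables in the Gagliardo semi-norm shows that $\varphi \in W^{s,p}(B_R(x_o))$ with compact support in $B_R(x_o)$, so it is admissible in the weak formulation of $(-\Delta_p)^s w = f$. Second, I would perform the substitution $x = x_o + R\xi$, $y = x_o + R\eta$ on both sides of that weak formulation. The scaling identities
$$w(x)-w(y) = R^s\big(w_R(\xi)-w_R(\eta)\big),\qquad \varphi(x)-\varphi(y) = R^s\big(\psi(\xi)-\psi(\eta)\big),$$
together with $|x-y|=R|\xi-\eta|$ and $\mathrm{d}x\,\mathrm{d}y = R^{2n}\,\mathrm{d}\xi\,\mathrm{d}\eta$, multiply the integrand on the left-hand side by $R^{s(p-1)}\cdot R^s \cdot R^{-(n+sp)}\cdot R^{2n} = R^n$. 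Thus the left-hand side equals $R^n$ times the nonlocal bilinear form of $w_R$ tested against $\psi$. The right-hand side transforms as $\int_{B_R(x_o)} f\varphi\,\mathrm{d}x = \int_{B_1(0)} R^s f(x_o+R\xi)\psi(\xi)\,R^n\,\mathrm{d}\xi = R^n\int_{B_1(0)} f_R\psi\,\mathrm{d}\xi$. Dividing by $R^n$ yields the desired weak equation for $w_R$ with inhomogeneity $f_R$.

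Finally, the same change of variables, applied separately to the Gagliardo semi-norm, the $L^p$-norm, the weighted tail integral from~\eqref{Lebesgue-weight}, and to $\|f_R\|_{L^{p'}(B_1(0))}$, turns each of these into an explicit power of $R$ times the corresponding quantity for $w$ or $f$, yielding the memberships $w_R \in W^{s,p}(B_1(0)) \cap L^{p-1}_{sp}(\R^n)$ and $f_R \in L^{p'}(B_1(0))$. There is no genuine obstacle; the only point that deserves attention is to verify that the three powers of $R$ produced by the two finite differences and the kernel $|x-y|^{-(n+sp)}$ combine, together with the Jacobian, to exactly the factor $R^n$ that also arises on the right-hand side, so that the resulting equation for $w_R$ is scale-invariant in form.
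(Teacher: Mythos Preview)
Your proof is correct; the change of variables and the bookkeeping of the powers of $R$ are exactly right. The paper itself does not give a proof of this lemma, presenting it only as a ``simple observation,'' so your argument is precisely the routine verification that is implicitly being invoked.
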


\subsection{Tail estimate}
In the proof of the Calder\'on \& Zygmund estimate, we use the following tail estimate, which is a generalization of~\cite[Lemma 2.6]{BK}. In case $p = 2$, a tail estimate of similar type is proved in~\cite[Lemma 2.9]{KNS-22}.

\begin{lemma}\label{lem:tail-rho}
Let $p>1$ and $s\in(0,1)$. There exists a constant $c = c(s,n,p)>0$ such that for  
$u \in W^{1,p}(B_2) \cap L^{p-1}_{sp}(\R^n)$ and $x_o\in B_2$, $R\in(0,1]$ with $B_{R}(x_o)\subset B_2$ and $\rho>0$, $k\in\N_0$ with $\frac12 R\le 2^k\rho< R$ we have
\begin{align*} 
    \mathrm{Tail} \big(u&-(u)_{B_{\rho}(x_o)}; B_\rho (x_o)\big) \\
    &\leq 
    \frac{c}{R^{\frac{n}{p-1}}}\Big(\frac{\rho}{R}\Big)^{sp'} 
    \Bigg[\mathrm{Tail} \big(u-(u)_{B_{2}};B_{2}\big) +
    \bigg[
    \bint_{B_{2}}
    |\nabla u|^{p} \,\d x\bigg]^\frac{1}{p} \Bigg]\\\nonumber
    &\quad + 
    c \,\rho \sum_{j=1}^{k} 2^{-j(sp' - 1)} \bigg[ \bint_{B_{2^j \rho}(x_o)} |\nabla u|^p \, \d x \bigg]^\frac{1}{p}.
\end{align*}
If $k=0$, the last term is interpreted as zero. 
\end{lemma}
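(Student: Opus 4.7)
The plan is to estimate $T:=\mathrm{Tail}(u-(u)_{B_\rho(x_o)};B_\rho(x_o))$ by working with its $(p-1)$-th power, which unravels as $T^{p-1}=\rho^{sp}\int_{\R^n\setminus B_\rho(x_o)}|u-(u)_{B_\rho}|^{p-1}|x-x_o|^{-n-sp}\,\dx$, and decomposing the domain of integration dyadically as $\bigcup_{j=1}^{k}A_j\cup(\R^n\setminus B_{2^k\rho}(x_o))$ with $A_j:=B_{2^j\rho}(x_o)\setminus B_{2^{j-1}\rho}(x_o)$. Thanks to the hypothesis $2^k\rho<R$ every annulus $A_j$ sits inside $B_R(x_o)\subset B_2$, giving access to $\nabla u$ there; only the far-field part will have to be connected to data on $B_2$.

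For each inner annular integral $I_j$, the lower bound $|x-x_o|\ge 2^{j-1}\rho$ together with $|A_j|\asymp(2^j\rho)^n$ reduces it to $c\,2^{-jsp}\bint_{B_{2^j\rho}(x_o)}|u-(u)_{B_\rho(x_o)}|^{p-1}\,\dx$. I then write $u-(u)_{B_\rho}$ as $[u-(u)_{B_{2^j\rho}}]+\sum_{i=1}^{j}[(u)_{B_{2^i\rho}}-(u)_{B_{2^{i-1}\rho}}]$ and apply $|a+b|^{p-1}\le c(|a|^{p-1}+|b|^{p-1})$. Poincar\'e and H\"older give $\bint_{B_{2^j\rho}}|u-(u)_{B_{2^j\rho}}|^{p-1}\le c(2^j\rho)^{p-1}S_j^{p-1}$ with $S_i:=\big(\bint_{B_{2^i\rho}(x_o)}|\nabla u|^p\,\dx\big)^{1/p}$, while the standard nested-average inequality $|(u)_{B_{2^i\rho}}-(u)_{B_{2^{i-1}\rho}}|\le c\,2^i\rho\,S_i$ bounds each telescoping difference. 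Summing over $j$, exchanging the order of summation, and collapsing the inner geometric series $\sum_{j\ge i}2^{-jsp}\le c\cdot 2^{-isp}$ identifies the total inner contribution as $c\rho^{p-1}\sum_{i=1}^{k}\big(2^{-i(sp'-1)}S_i\big)^{p-1}$, exactly the $(p-1)$-th power of the target local sum by the identity $sp-(p-1)=(p-1)(sp'-1)$.

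For the far-field part I rewrite $\rho^{sp}\int_{\R^n\setminus B_{2^k\rho}}\cdots=2^{-ksp}\,\mathrm{Tail}(u-(u)_{B_\rho};B_{2^k\rho}(x_o))^{p-1}$; the factor $2^{-ksp}\le c(\rho/R)^{sp}$ (from $2^{-k}\le 2\rho/R$) produces the required $(\rho/R)^{sp'(p-1)}$ scaling. A change of reference from $(u)_{B_\rho}$ to $(u)_{B_{2^k\rho}}$ generates an extra telescoping correction, which is absorbed into the inner sum thanks to the elementary inequality $2^{-ksp'}\cdot 2^i\le 2^{-i(sp'-1)}$ valid for every $i\le k$. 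The remaining piece $\mathrm{Tail}(u-(u)_{B_{2^k\rho}(x_o)};B_{2^k\rho}(x_o))^{p-1}$ splits into contributions from $\R^n\setminus B_2$ and from the intermediate shell $B_2\setminus B_{2^k\rho}(x_o)$. On the outer part, $x_o\in B_2$ makes $|x-x_o|$ comparable to $|x|$ up to a bounded local correction, so shifting the reference to $(u)_{B_2}$ identifies $\mathrm{Tail}(u-(u)_{B_2};B_2)^{p-1}$; on the intermediate shell I combine the kernel bound $|x-x_o|\ge 2^k\rho\asymp R$, the volume ratio $|B_2|/|B_{2^k\rho}(x_o)|\asymp R^{-n}$, Poincar\'e on $B_2$, and the estimate $|(u)_{B_2}-(u)_{B_{2^k\rho}(x_o)}|\le c\,R^{-n/p}S_2$ with $S_2:=\big(\bint_{B_2}|\nabla u|^p\,\dx\big)^{1/p}$, obtained by H\"older after extending the integral from $B_{2^k\rho}(x_o)$ to $B_2$, to secure the factor $R^{-n}$ required by the final estimate.

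Assembling everything gives $T^{p-1}\le c\rho^{p-1}\sum_{j=1}^{k}\big(2^{-j(sp'-1)}S_j\big)^{p-1}+c\,R^{-n}(\rho/R)^{sp}\big[\mathrm{Tail}(u-(u)_{B_2};B_2)^{p-1}+S_2^{p-1}\big]$. Taking the $(p-1)$-th root with $(a+b)^{1/(p-1)}\le c_p\big(a^{1/(p-1)}+b^{1/(p-1)}\big)$ separates the two summands and recovers the $R^{-n/(p-1)}$-factor of the statement. When $p\ge 2$ the sequence-space embedding $\ell^{p-1}\hookrightarrow\ell^1$ in the form $(\sum x_j^{p-1})^{1/(p-1)}\le\sum x_j$ delivers the linear local sum $c\rho\sum_{j}2^{-j(sp'-1)}S_j$ directly; when $p\in(1,2)$ the comparison requires the built-in majorisation $S_j\le c(2^j\rho)^{-n/p}S_2$ (consequence of $B_{2^j\rho}\subset B_2$), which forces geometric decay of the entries $2^{-j(sp'-1)}S_j$ and thereby makes the $\ell^{p-1}$-quasinorm comparable to the $\ell^1$-norm with a $c(n,s,p)$-dependent constant. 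I expect the main technical obstacle to be the careful dyadic bookkeeping of the intermediate shell $B_2\setminus B_{2^k\rho}(x_o)$: balls centred at $x_o$ of radius exceeding $R$ protrude from $B_2$ and must be handled via a finite telescoping up to scale $R$ combined with a single H\"older-type jump to $B_2$, so that the factor $R^{-n/(p-1)}$ emerges cleanly and no additional power of $R$ is picked up along the way.
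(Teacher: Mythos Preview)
Your strategy is essentially the paper's own: the same dyadic annular decomposition $\R^n\setminus B_\rho=\bigcup A_j\cup(\R^n\setminus B_{2^k\rho})$, the same telescoping of mean values on each annulus, and the same far-field treatment (split $\R^n\setminus B_{2^k\rho}$ into $\R^n\setminus B_2$ and the intermediate shell, shift reference to $(u)_{B_2}$, use the kernel bound $|x-x_o|\gtrsim R|x|$ on $\R^n\setminus B_2$, and H\"older--Poincar\'e on the shell). The computations you sketch produce exactly the quantity $c\rho^{p-1}\sum_{i=1}^k\big(2^{-i(sp'-1)}S_i\big)^{p-1}$ the paper arrives at. For $p\ge 2$ your embedding $(\sum x_i^{p-1})^{1/(p-1)}\le\sum x_i$ is correct (the direction of the $\ell^1\hookrightarrow\ell^{p-1}$ inclusion is stated backwards, but the inequality is right), and the argument closes.

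The problem is your $p<2$ step. You claim that the trivial bound $S_j\le c(2^j\rho)^{-n/p}S_2$ ``forces geometric decay of the entries $2^{-j(sp'-1)}S_j$'' and that this makes the $\ell^{p-1}$-quasinorm comparable to the $\ell^1$-norm with a constant depending only on $(n,s,p)$. This is false on two counts. First, an upper bound with geometric decay says nothing about the actual decay of the entries: the $S_j$ could be essentially constant and very small on most scales, with a single large scale. Second, and decisively, even under your majorant the ratio $(\sum a_i^{p-1})^{1/(p-1)}/\sum a_i$ is \emph{not} bounded in terms of $(n,s,p)$: take $a_i=\varepsilon$ for $i<k$ and $a_k=M$ with $\varepsilon=M/(k-1)$; this satisfies any geometric majorant $a_i\le C\theta^i$ once $M\le C\theta^k$, yet the ratio is of order $k^{(2-p)/(p-1)}$. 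So the passage from $\sum a_i^{p-1}$ to $(\sum a_i)^{p-1}$ cannot be made this way, and your identification of the inner contribution as ``exactly the $(p-1)$-th power of the target local sum'' already presumes what must be proved. (The paper's own proof is terse at this very point---``Collecting these estimates gives the desired conclusion''---so you have put your finger on the genuinely delicate step; but the fix you propose does not work.)
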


\begin{proof}
Let us re-write the left-hand side in the form
\begin{equation*}
    \mathrm{Tail} \big(u-(u)_{B_{\rho}(x_o)}; B_\rho (x_o)\big)^{p-1}
    =
    \mathbf T^{p-1} + \sum_{j=0}^{k-1}\mathbf B_{j}^{p-1},
\end{equation*}
where we abbreviated
\begin{equation*}
    \mathbf T
    :=
    \bigg[
    \rho^{sp}
    \int_{\mathbb{R}^{n}\setminus B_{2^k\rho}(x_o)}\frac{|u-(u)_{B_{\rho}(x_o)}|^{p-1}}{|x-x_o|^{n+sp}} \,\d x\bigg]^\frac1{p-1},
\end{equation*}
and
\begin{equation*}
    \mathbf B_{j}
    :=
    \bigg[
    \rho^{sp}
    \int_{B_{2^{j+1}\rho}(x_o)\setminus B_{2^{j}\rho}(x_o)}\frac{|u-(u)_{B_{\rho}(x_o)}|^{p-1}}{|x-x_o|^{n+sp}} \,\d x\bigg]^\frac1{p-1} .
\end{equation*}
If $k=0$, the sum $\sum_{j=0}^{k-1}\mathbf B_{j}^{p-1}$ has to be interpreted as zero. 
To estimate the $\mathbf T$-term, we apply  Minkowski's inequality in the case $p\ge 2$, while for $1<p\le2$ we first use the elementary inequality $(a+b)^{p-1}\le a^{p-1} +b^{p-1}$ and then the convexity of $t\mapsto t^\frac1{p-1}$. 
This yields 
\begin{align*}
    \mathbf T
    &\le 
    c\bigg[\rho^{sp}
    \int_{\mathbb{R}^{n}\setminus B_{2^k\rho}(x_o)}\frac{|u-(u)_{B_2}|^{p-1}}{|x-x_o|^{n+sp}} \,\d x\bigg]^{\frac{1}{p-1}} \\
    &\quad +
    c\bigg[\rho^{sp}
    \int_{\mathbb{R}^{n}\setminus B_{2^k\rho}(x_o)}\frac{|(u)_{B_2}-(u)_{B_{\rho}(x_o)}|^{p-1}}{|x-x_o|^{n+sp}} \,\d x\bigg]^{\frac{1}{p-1}}
    =:
    c\,\mathbf T_1 + c\,\mathbf T_2,
\end{align*}
where the constant  $c$ depends only on $p$. We now estimate $\mathbf T_1$ and $\mathbf T_2$ one by one, starting with $\mathbf T_1$. In the integral of $\mathbf T_1^{p-1}$ we decompose the domain of integration  into $\mathbb{R}^{n}\setminus B_{2}$ and $B_2\setminus B_{2^k\rho}(x_o)$. This yields
\begin{equation*}
    \mathbf T_1^{p-1}
    =
    \rho^{sp}
    \int_{\mathbb{R}^{n}\setminus B_{2}}\frac{|u-(u)_{B_2}|^{p-1}}{|x-x_o|^{n+sp}} \,\d x +
    \rho^{sp}\int_{B_2\setminus B_{2^k\rho}(x_o)}\frac{|u-(u)_{B_2}|^{p-1}}{|x-x_o|^{n+sp}} \,\d x .
\end{equation*}
To estimate the first term, we note that $B_{R}(x_o)\subset B_2$ and $x\in \R^n \setminus B_2$ imply that
\[
\frac{|x-x_o|}{|x|}\ge 1-\frac{|x_o|}{|x|}\ge 1-\frac{|x_o|}{2}\ge 1-\frac{2-R}{2}=\frac{R}2.
\]
For the second term, i.e.~the integral over
$B_2\setminus B_{2^k\rho}(x_o)$, we have 
$|x-x_o|\ge 2^k\rho \ge \tfrac12 R$ as bound from below. Together with Hölder's and Poincar\'e's inequalities this yields
\begin{align*}
    \mathbf T_1^{p-1}
    &\le 
    \frac{2^{n+sp}}{R^n}\Big(\frac{\rho}{R}\Big)^{sp} \bigg[
    \int_{\mathbb{R}^{n}\setminus B_{2}}\frac{|u-(u)_{B_2}|^{p-1}}{|x|^{n+sp}} \,\d x +
    \int_{B_2}|u-(u)_{B_2}|^{p-1} \,\d x\bigg] \\
    &\le 
    \frac{c}{R^n}\Big(\frac{\rho}{R}\Big)^{sp} \Bigg[
    \mathrm{Tail} \big(u-(u)_{B_{2}};B_{2}\big)^{p-1} +
    \bigg[\bint_{B_2}|u-(u)_{B_2}|^{p} \,\d x\bigg]^{\frac{p-1}{p}}\Bigg] \\
    &\le 
    \frac{c}{R^n}\Big(\frac{\rho}{R}\Big)^{sp} \Bigg[
    \mathrm{Tail} \big(u-(u)_{B_{2}};B_{2}\big) +
    \bigg[\bint_{B_2}|\nabla u|^{p} \,\d x\bigg]^{\frac{1}{p}}\Bigg]^{p-1},
\end{align*}
with $c=c(n,p)$.
To estimate the  term $\mathbf T_2$ we use 
\begin{equation*}
    \sigma^{sp}
    \int_{\mathbb{R}^{n}\setminus B_{\sigma}(x_o)}\frac{1}{|x-x_o|^{n+sp}} 
    \,\d x
    =
    \frac{n|B_1|}{sp},
\end{equation*}
and $2^{-k}\le 2\rho/R$. This provides us with  
\begin{align*}
    \mathbf T_2
    &\le 
    c\, 2^{-ksp'}
    \big|(u)_{B_2}-(u)_{B_{\rho}(x_o)}\big| \\
    &\le 
    c \Big(\frac{\rho}{R}\Big)^{sp'}
    \big|(u)_{B_{2}}-(u)_{B_{2^k\rho}(x_o)}\big| +
    c\,2^{-ksp'} \big|(u)_{B_{2^k\rho}(x_o)}-(u)_{B_{\rho}(x_o)}\big| ,
\end{align*}
where $c=c(n,p,s)$.
Subsequently, the first term on the right-hand side is further estimated by Hölder's and Poincar\'e's inequalities as 
\begin{align*}
    \big|(u)_{B_{2}}-(u)_{B_{2^k\rho}(x_o)}\big| 
    &\le 
    c\bigg[\bint_{B_{2^{k}\rho}(x_o)}
    |u-(u)_{B_2}|^{p} \,\d x\bigg]^\frac{1}{p} \\
    &\le 
    \frac{c}{R^{\frac{n}{p}}}  
    \bigg[\bint_{B_{2}}
    |u-(u)_{B_{2}}|^{p} \,\d x\bigg]^\frac{1}{p} \le 
    \frac{c}{R^{\frac{n}{p-1}}}  
    \bigg[
    \bint_{B_{2}}
    |\nabla u|^{p} \,\d x\bigg]^\frac{1}{p}.
\end{align*}
To obtain the second-to-last line we enlarged the domain of integration and used the lower bound for $2^k\rho\ge \frac12 R$, whereas for the last line we used $R\le 1$ in addition to  Poincar\'e's inequality. The second term on the right-hand side in the estimate of $\mathbf T_2$ is treated in a similar way. In fact, we have
\begin{align*}
    2^{-ksp'}\big|(u)_{B_{2^k\rho}(x_o)}-(u)_{B_{\rho}(x_o)}\big| 
    &\le 
    2^{-ksp'}\sum_{j=0}^{k-1} \big|(u)_{B_{2^{j+1}\rho}(x_o)}-(u)_{B_{2^j\rho}(x_o)}\big| \\
    &\le 
    c\,2^{-ksp'}\sum_{j=0}^{k-1} \bigg[\bint_{B_{2^{j+1}\rho}(x_o)}
    \big|
    u-(u)_{B_{2^{j+1}\rho}(x_o)}
    \big|^{p} \,\d x\bigg]^\frac{1}{p} \\
    &\le 
    c\,2^{-ksp'}\rho \sum_{j=1}^{k} 
    2^{j}  
    \bigg[
    \bint_{B_{2^{j}\rho}(x_o)}
    |\nabla u|^{p} \,\d x\bigg]^\frac{1}{p} \\
    &\le 
    c\,\rho \sum_{j=1}^{k} 
    2^{-j(sp'-1)}  
    \bigg[
    \bint_{B_{2^{j}\rho}(x_o)}
    |\nabla u|^{p} \,\d x\bigg]^\frac{1}{p} .
\end{align*}
The $j$-th term $\mathbf B_j$ is treated in the same way as the $\mathbf T$-term. Namely, we first estimate the denominator $|x-x_o|$ from below by $2^j\rho$. Then,  we distinguish between the cases $p\ge 2$ and $1<p<2$ to split the integral and finally apply H\"older's inequality. This way we get
\begin{align*}
    \mathbf B_{j}
    &\leq 
    c\bigg[
    2^{-jsp}
    \bint_{B_{2^{j+1}\rho}}
    \big|
    u-(u)_{B_{\rho}}
    \big|^{p-1} \,\d x\bigg]^\frac{1}{p-1}\\
    &\leq 
    c\,2^{-jsp'}
    \bigg[
    \bigg(\bint_{B_{2^{j+1}\rho}}
    \big|
    u-(u)_{B_{2^{j+1}\rho}}
    \big|^{p-1} \,\d x\bigg)^\frac{1}{p-1} 
    +
    \big|(u)_{B_{2^{j+1}\rho}}-(u)_{B_{\rho}}\big|\bigg]\\
    &\leq 
    c\,2^{-jsp'}
    \bigg[\bigg(
    \bint_{B_{2^{j+1}\rho}}
    \big|
    u-(u)_{B_{2^{j+1}\rho}}
    \big|^{p} \,\d x\bigg)^\frac{1}{p}+\sum_{i=0}^{j}\big|(u)_{B_{2^{i+1}\rho}}-(u)_{B_{2^{i}\rho}}\big|\bigg]\\
    &\leq 
    c\,2^{-jsp'}
    \sum_{i=1}^{j+1}
    \bigg[\bint_{B_{2^{i}\rho}}
    \big|u-(u)_{B_{2^{i}\rho}}
    \big|^{p} \,\d x\bigg]^\frac{1}{p} .
\end{align*}
Summing over $j$ from $0$ to $k-1$, applying Poincar\'e's inequality and interchanging the order of the summations then leads to
\begin{align*}
    \sum_{j=0}^{k-1}
    \mathbf B_{j}
    &\le 
    c\,\rho \sum_{j=0}^{k-1} 
    2^{-jsp'}\sum_{i=1}^{j+1} 
    2^i \bigg[\bint_{B_{2^{i}\rho}}|\nabla u|^{p}\d x\bigg]^\frac{1}{p} \\
    &=
    c\,\rho 
    \sum_{i=1}^{k}
    2^i \bigg[\bint_{B_{2^{i}\rho}}|\nabla u|^{p}\d x\bigg]^\frac{1}{p}
    \sum_{j=i-1}^{k-1} 2^{-jsp'}\\
    &\le
    c\,\rho \sum_{i=1}^{k} 2^{-i(sp'-1)} \bigg[\bint_{B_{2^{i}\rho}}|\nabla u|^{p}\d x\bigg]^\frac{1}{p}.
\end{align*}
To obtain the last line we used
\begin{align*}
    \sum_{j=i-1}^{k-1} 2^{-jsp'}
    &=
    2^{-(i-1)sp'} \sum_{j=0}^{k-i} 2^{-jsp'}
    \le 
    \frac{2^{-(i-1)sp'}}{1-2^{-sp'}}.
\end{align*}
Collecting these estimates gives the desired conclusion.
\end{proof}

\section{Comparison estimates}\label{S:comp-est} 
Our goal in this section is to prove comparison estimates for the difference between the original solution and the solution to an associated homogeneous Dirchlet problem. The precise setup is as follows. We consider a ball $B_{R}(x_o)$ such that $B_{2R}(x_o) \Subset \Omega$ and denote by $v\in W^{s,p}(B_R(x_o)) \cap L^{p-1}_{sp} (\R^n)$ the unique weak solution to the homogeneous Dirichlet problem 
\begin{equation}\label{CD-homo} 
\left\{
    \begin{array}{cl}
      (- \Delta_p)^s v  = 0 & \mbox{in $B_R(x_o)$,} \\[6pt]
      v = u & \mbox{a.e.~in $\R^n \setminus B_R(x_o)$.}
    \end{array}
\right.
\end{equation}
Before proceeding to the comparison estimates, we summarize in the following remark the available qualitative properties of the gradient of $u$ and $v$, which will be called upon later at our will. 

\begin{remark}\label{rem:W^1p}\upshape
Let $p>1$, $s\in (0,1)$ with $sp'>1$, $f\in L^{p'}_{\loc}(\Omega)$, and $B_{2R}(x_o)\Subset\Omega$.
Under these assumptions, Theorem~\ref{thm:diening-nondifferentiabledata} guarantees that on one hand $u\in W^{1+\alpha, p}_{\loc}(\Omega)$ for each $\alpha\in (0,\alpha_o)$, where $\alpha_o\in(0,1)$ is defined in \eqref{def:alpha-o}, while on the other hand $v\in W^{1+\alpha, p}_{\loc}(B_R(x_o))$. In particular, this means that $\nabla u\in L^p_{\rm loc}(\Omega)$ and $\nabla v\in L^p_{\rm loc}(B_R(x_o))$.
\end{remark}

As a preliminary result, we first derive a comparison estimate between $u$ and $v$ in terms of the $W^{s,p}$-norm. Note that it does not involve the gradient of solutions.

\begin{lemma} \label{lem:u-v-fractest}
Let $p>1$, $s\in(0,1)$, $f \in L^{p'}_{\loc}(\Omega)$, $B_{2R}(x_o) \Subset \Omega$ and let $u$ be a weak solution to~\eqref{eq:frac-p-lap} in the sense Definition~\ref{def:weak-sol} and $v$ the unique weak solution to the homogeneous Dirichlet problem \eqref{CD-homo} and denote $w := u-v$. There exists a constant $c=c(n,p,s)$, such that
if $p \in[2,\infty)$, then we have
\begin{align} \label{eq:u-v-fractest}
    \iint_{\R^n \times  \R^n} \frac{|w(x) - w(y)|^p}{|x-y|^{n+sp}} \, \d x \d y 
    \leq 
    c\, R^{sp'} \int_{B_R}  |f|^{p'} \, \d x,
\end{align}
whereas if $p\in(1,2]$, then for any $\delta\in(0,1]$ we have
\begin{align}\label{eq:u-v-fractest-p<2}\nonumber
    \iint_{B_{2R} \times B_{2R}}& \frac{|w(x) - w(y)|^p}{|x-y|^{n+sp}} \, \d x \d y \\
    &\leq 
    \delta \iint_{B_{2R} \times B_{2R}} \frac{|u(x) - u(y)|^p}{|x-y|^{n+sp}} \, \d x \d y +
    \frac{c}{\delta^\frac{2-p}{p-1}} R^{sp'} \int_{B_R}  |f|^{p'} \, \d x.
\end{align}
\end{lemma}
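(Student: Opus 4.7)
\medskip

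\noindent\textbf{Proof proposal.}
The natural plan is to use $w=u-v$ itself as a test function. Since $v=u$ a.e.\ on $\R^n\setminus B_R(x_o)$, the function $w$ lies in $W^{s,p}(B_R)$ and is compactly supported in $B_R$, hence admissible in both Definition~\ref{def:weak-sol} for $u$ and in the weak formulation of \eqref{CD-homo} for $v$. Testing each equation with $w$ and subtracting, the right-hand sides collapse to $\int_{B_R}f\,w\,\d x$ and we are left with
\begin{equation*}
    \iint_{\R^n\times\R^n}
    \frac{\bigl(\boldsymbol{U}^{p-1}(x,y)-\boldsymbol{V}^{p-1}(x,y)\bigr)\bigl(w(x)-w(y)\bigr)}{|x-y|^{n+sp}}\,\d x\d y
    =
    \int_{B_R}f\,w\,\d x,
\end{equation*}
where I write $U(x,y):=u(x)-u(y)$, $V(x,y):=v(x)-v(y)$ and $\boldsymbol{U}^{p-1}:=|U|^{p-2}U$ (analogously for $V$). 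From here it is a matter of exploiting the standard monotonicity of the map $a\mapsto\boldsymbol{a}^{p-1}$, which takes a different shape in the super- and sub-quadratic regimes.

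In the range $p\in[2,\infty)$ I would apply the classical monotonicity
$(\boldsymbol{a}^{p-1}-\boldsymbol{b}^{p-1})(a-b)\ge c(p)|a-b|^p$ pointwise in the integrand. Noting that $w$ vanishes outside $B_R$, the LHS dominates $c(p)$ times the full $\R^n\times\R^n$ Gagliardo energy of $w$, so
\begin{equation*}
    [w]_{W^{s,p}(\R^n)}^p
    \le c\int_{B_R}f\,w\,\d x
    \le c\,\|f\|_{L^{p'}(B_R)}\,\|w\|_{L^p(B_R)}.
\end{equation*}
Since $w\equiv 0$ on $B_{2R}\setminus B_R$, Lemma~\ref{lem:coz17b-4.7} applied on $B_{2R}$ (with $\Omega_o=B_{2R}\setminus B_R$) gives $\|w\|_{L^p(B_R)}\le c\,R^s[w]_{W^{s,p}(B_{2R})}\le c\,R^s[w]_{W^{s,p}(\R^n)}$. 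Feeding this back and solving the resulting inequality in $[w]_{W^{s,p}(\R^n)}^{p-1}$ yields~\eqref{eq:u-v-fractest} with the claimed $R^{sp'}\int|f|^{p'}$ on the right.

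The sub-quadratic case $p\in(1,2]$ is where the real work is. Now monotonicity only produces the degenerate bound
$(\boldsymbol{a}^{p-1}-\boldsymbol{b}^{p-1})(a-b)\ge c\,|a-b|^2(|a|+|b|)^{p-2}$, so from the subtracted identity I get control on
\begin{equation*}
    \mathcal{E}:=\iint_{B_{2R}\times B_{2R}}\frac{|w(x)-w(y)|^2}{(|U|+|V|)^{2-p}\,|x-y|^{n+sp}}\,\d x\d y
    \le c\int_{B_R}f\,w\,\d x.
\end{equation*}
To turn $\mathcal{E}$ into an $L^p$-type quantity, I would insert the weight $(|U|+|V|)^{(2-p)p/2}$ in the integrand of $I_w:=\iint_{B_{2R}\times B_{2R}}|w(x)-w(y)|^p|x-y|^{-n-sp}\,\d x\d y$ and apply H\"older with exponents $(2/p,2/(2-p))$:
\begin{equation*}
    I_w\le \mathcal{E}^{p/2}
    \bigg[\iint_{B_{2R}\times B_{2R}}
    \frac{(|U|+|V|)^p}{|x-y|^{n+sp}}\,\d x\d y\bigg]^{(2-p)/2}
    \le c\,\mathcal{E}^{p/2}\bigl(I_u+I_w\bigr)^{(2-p)/2},
\end{equation*}
where I used $|V|\le|U|+|W|$ in the last step. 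Bounding $\int_{B_R}fw$ by H\"older and Lemma~\ref{lem:coz17b-4.7} as before, rearranging, and applying Young's inequality with exponents $(1/(2-p),1/(p-1))$ to split the $(I_u+I_w)^{2-p}$ factor against the $\|f\|_{L^{p'}}^p$ factor, one arrives at
\begin{equation*}
    I_w \le \delta(I_u+I_w)+\frac{c}{\delta^{(2-p)/(p-1)}}\,R^{sp'}\int_{B_R}|f|^{p'}\,\d x.
\end{equation*}
Absorbing $\delta I_w$ on the left (after a harmless rescaling of $\delta$) gives \eqref{eq:u-v-fractest-p<2}.

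The principal obstacle is the Young-inequality bookkeeping in the sub-quadratic step: the weighted factor $(|U|+|V|)^{p}$ produced by H\"older cannot be dominated by $I_u$ alone, so one is forced to reintroduce $I_w$ on the right and then absorb it with a quantitative constant, which is precisely the source of the explicit $\delta^{-(2-p)/(p-1)}$ dependence in the final estimate. Additionally, one must check that all integrals of the form $I_u$, $I_w$, $\mathcal{E}$ are a priori finite; this is guaranteed by Remark~\ref{rem:W^1p} together with Lemma~\ref{lem:W1q-Wsq-embedding}, so that the absorption step is legitimate.
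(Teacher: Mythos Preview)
Your proposal is correct and follows essentially the same route as the paper: test the difference of the equations with $w=u-v$, use monotonicity of $a\mapsto\boldsymbol a^{p-1}$, and control $\int_{B_R}fw$ via H\"older together with the fractional Poincar\'e inequality of Lemma~\ref{lem:coz17b-4.7}. The only cosmetic difference is in the sub-quadratic step: the paper applies Young's inequality \emph{pointwise} to write $|W|^p\le \delta(|U|+|V|)^p+c\,\delta^{-(2-p)/p}\mathfrak D$ and then integrates, whereas you apply H\"older at the integral level to get $I_w\le\mathcal E^{p/2}(I_u+I_w)^{(2-p)/2}$ and then Young on the product; these are dual formulations of the same estimate and lead to the identical $\delta^{-(2-p)/(p-1)}$ dependence. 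One small remark: your appeal to Remark~\ref{rem:W^1p} for a~priori finiteness tacitly uses $sp'>1$, which is not assumed here; finiteness of $I_u$ and $I_w$ already follows directly from $u\in W^{s,p}_{\loc}(\Omega)$ and $v\in W^{s,p}(B_R)$ with $v=u$ outside $B_R$.
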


\begin{proof}
Taking the difference of the equations for $u$ and $v$ and testing the result with $w$, we obtain
\begin{align}\label{comp-I}
    \mathbf{I}
    &:=
    \iint_{\R^n \times \R^n} 
    \frac{\big(\boldsymbol{(u(x) - u(y))}^{p-1} - \boldsymbol{(v(x) - v(y))}^{p-1}\big) (w(x) - w(y))}{|x-y|^{n+sp}} \, \d x \d y \nonumber\\
    &\, = 
    \int_{B_R} f w \, \d x .
\end{align}
Noting that 
\begin{equation}\label{wx-y}
    w(x) - w(y)
    =
    (u(x)-u(y)) - (v(x)-v(y)).
\end{equation}
the integrand of the integral on the left-hand side in~\eqref{comp-I} is non-negative due to monotonicity of the map $\R\ni a\mapsto \boldsymbol{a}^{p-1}$. By using H\"older's inequality, Lemma~\ref{lem:coz17b-4.7} (note that $w\equiv 0$ on $B_{2R}\setminus B_R$) and Young's inequality we obtain for the right-hand side
of \eqref{comp-I} for any $\epsilon\in(0,1)$ that
\begin{align}\label{est:fw}\nonumber
    \int_{B_R} f w \, \d x 
    &\leq 
    \|f\|_{L^{p'}(B_R)} \|w\|_{L^{p}(B_{R})} 
    =
    \|f\|_{L^{p'}(B_R)} \|w\|_{L^{p}(B_{2R})}\\\nonumber
    &\leq 
    c\, R^s \|f\|_{L^{p'}(B_R)} [w]_{W^{s,p}(B_{2R})} \\
    &\leq 
    \epsilon  [w]_{W^{s,p}(B_{2R})}^p + 
    c\,\epsilon^{-\frac{1}{p-1}} R^{sp'} \|f\|_{L^{p'}(B_R)}^{p'},
\end{align}
where $c=c(n,p,s)$. In the super-quadratic case $p\ge 2$, we deduce from Lemma~\ref{lem:Acerbi-Fusco} that 
\begin{equation*}
    \big(\boldsymbol{b}^{p-1}- \boldsymbol{a}^{p-1}\big)(b-a) \geq \tfrac{1}{c(p)} |b-a|^p
    \quad\mbox{for any $a,b\in \R$,}
\end{equation*}
so that
\begin{align*}
    \iint_{\R^n \times \R^n} \frac{|w(x) - w(y)|^p}{|x-y|^{n+sp}} \, \d x \d y  
    &\leq 
    c(p) \int_{B_R} f w \, \d x \\
    &\le c(p)
    \Big[
    \epsilon  [w]_{W^{s,p}(B_{2R})}^p + 
    c\epsilon^{-\frac{1}{p-1}} R^{sp'} \|f\|_{L^{p'}(B_R)}^{p'}
    \Big]\\
    &\leq 
    \tfrac12  [w]_{W^{s,p}(B_{2R})}^p + c\, R^{sp'} \|f\|_{L^{p'}(B_R)}^{p'} .
\end{align*}
To obtain the second-to-last line, we  used \eqref{est:fw}, whereas in the last line we fixed $\epsilon=\frac1{2c(p)}$.
We absorb the first term of the right-hand side on the left to conclude the proof of this case.

In the sub-quadratic case $p\in(1,2)$, a lower bound for the left-hand side integral in \eqref{comp-I}~results from~\eqref{wx-y} and Lemma~\ref{lem:Acerbi-Fusco}. In fact, we have
\begin{align*}
    \mathfrak D
    &:=\big(\boldsymbol{(u(x)-u(y))}^{p-1}-
    \boldsymbol{(v(x)-v(y))}^{p-1}\big)
    (w(x)-w(y)) \\
    &\ge 
    \tfrac{1}{c(p)}
    \big(|u(x)-u(y)|+|v(x)-v(y)|\big)^{p-2}|w(x)-w(y)|^2.
\end{align*}
Applying first Young's inequality with exponents $(\frac{2}{2-p},\frac2p)$ and then the last display, we obtain
\begin{align*}
    |w(x)-w(y)|^p
    &=
    \delta^{\frac{2-p}{2}}\big(|u(x)-u(y)|+|v(x)-v(y)|\big)^{\frac{(2-p)p}{2}}  \\
    &\phantom{\le\,}
    \cdot
    \delta^{-\frac{2-p}{2}} \big(|u(x)-u(y)|+|v(x)-v(y)|\big)^{\frac{(p-2)p}{2}}|w(x)-w(y)|^p\\
    &\le 
    \delta \big(|u(x)-u(y)|+|v(x)-v(y)|\big)^p\\
    &\phantom{\le\,} +
    \delta^{-\frac{2-p}{p}} \big(|u(x)-u(y)|+|v(x)-v(y)|\big)^{p-2}|w(x)-w(y)|^2\\
    &\le 
    8\delta |u(x)-u(y)|^p + 2\delta |w(x)-w(y)|^p + 
    \frac{c(p)}{\delta^{\frac{2-p}{p}}}\mathfrak D.
\end{align*}
If the value of $\delta$ is restricted to $(0,\frac14]$ in the above algebraic inequality, the second term on the right-hand side can be absorbed on the left, so that 
\begin{align*}
    |w(x)-w(y)|^p
    &\le 
    16\delta |u(x)-u(y)|^p  + 
    \frac{c(p)}{\delta^{\frac{2-p}{p}}}\mathfrak D.
\end{align*}
Dividing both sides by $|x-y|^{n+sp}$, then integrating both $x$ and $y$ over $B_{2R}$ and finally recalling that the integrand of $\mathbf{I}$ in \eqref{comp-I} is non-negative, we obtain 
\begin{align*}
    [w]_{W^{s,p}(B_{2R})}^p 
    &\leq 
    16\delta [u]_{W^{s,p}(B_{2R})}^p +
    \frac{c(p)}{\delta^{\frac{2-p}{p}}} \mathbf{I} \\
    &\le 
    16\delta [u]_{W^{s,p}(B_{2R})}^p +
    \frac{c}{\delta^{\frac{2-p}{p}}}  
    \Big[\epsilon[w]_{W^{s,p}(B_{2R})}^p + 
    c\,\epsilon^{-\frac{1}{p-1}} R^{sp'} \|f\|_{L^{p'}(B_R)}^{p'}\Big],
\end{align*}
where $c=c(N,p,s)$.
Choosing $\epsilon=\frac{1}{2c}\delta^{\frac{2-p}{p}}$ allows to re-absorb $[w]_{W^{s,p}(B_{2R})}^p$ on the left-hand side. Replacing $32 \delta$ that will appear in the front of the first term on the right-hand side of the resulting inequality  by $\delta$, i.e.~the one after re-absorbing $[w]_{W^{s,p}(B_{2R})}^p,$   yields the estimate claimed in this case as well.
\end{proof}

The next corollary unifies the cases $p\le2$ and $p\ge2$. Together with the Poincar\'e type inequality from Lemma~\ref{lem:coz17b-4.7}, this leads to a comparison estimate for $u-v$ in terms of the $L^p$-integral of $\nabla u$.

\begin{corollary}[zero-order comparison estimate]\label{cor:u-v-fractest}
Let $p>1$, $s\in(0,1)$ with $sp'>1$, $f \in L^{p'}_{\loc}(\Omega)$, and $B_{2R}(x_o) \Subset \Omega$.  Furthermore, let $u$ be a weak solution to~\eqref{eq:frac-p-lap} in the sense of  Definition~\ref{def:weak-sol} and $v$ the unique weak solution to the homogeneous Dirichlet problem \eqref{CD-homo}. Then, there exists a constant $c=c(n,p,s)$, such that
\begin{align*}
    \int_{B_{R}} |u-v|^p \, \d x 
    \leq
    \delta R^p\int_{B_{2R}} |\nabla u|^p \, \d x +
    \frac{c}{\delta^\frac1{p-1}} R^{spp'} \int_{B_R}  |f|^{p'} \, \d x
\end{align*}
for any $\delta\in(0,1]$.
\end{corollary}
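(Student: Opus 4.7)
\textbf{Proof plan for Corollary~\ref{cor:u-v-fractest}.} The idea is to first upgrade Lemma~\ref{lem:u-v-fractest} from a Gagliardo seminorm estimate to an $L^p$ estimate via a fat-level-set Poincar\'e inequality, and then to trade the $W^{s,p}$-seminorm of $u$ (which appears only in the subquadratic case) for the $L^p$-norm of $\nabla u$ through the embedding $W^{1,p}\hookrightarrow W^{s,p}$.

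Set $w:=u-v$. Since $v=u$ a.e.\ on $\R^n\setminus B_R(x_o)$, we have $w\equiv 0$ on the annulus $B_{2R}(x_o)\setminus B_R(x_o)$, whose measure is $(1-2^{-n})|B_{2R}(x_o)|$. First I would invoke Lemma~\ref{lem:coz17b-4.7} with $\gamma=1-2^{-n}$ and $s_o=s_o(p)>0$ (which is available since the assumption $sp'>1$ forces $s>(p-1)/p$, giving a uniform lower bound on $s$) to obtain
\begin{equation*}
    \int_{B_R(x_o)}|w|^p\,\d x
    \le
    \int_{B_{2R}(x_o)}|w|^p\,\d x
    \le
    c\,R^{sp}\,[w]_{W^{s,p}(B_{2R}(x_o))}^p,
\end{equation*}
with $c=c(n,p,s)$.

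In the superquadratic case $p\ge 2$, I would then apply \eqref{eq:u-v-fractest} of Lemma~\ref{lem:u-v-fractest} directly, which gives $[w]_{W^{s,p}(B_{2R})}^p\le c R^{sp'}\|f\|_{L^{p'}(B_R)}^{p'}$. Combining with the Poincar\'e bound above and noting $sp+sp'=spp'$ yields the claim (the first term on the right-hand side can simply be dropped, or kept with any $\delta\in(0,1]$ coefficient).

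In the subquadratic case $1<p<2$ I would use \eqref{eq:u-v-fractest-p<2} with a parameter $\delta'\in(0,1]$, obtaining
\begin{equation*}
    [w]_{W^{s,p}(B_{2R})}^p
    \le
    \delta'\,[u]_{W^{s,p}(B_{2R})}^p
    +\frac{c}{(\delta')^{\frac{2-p}{p-1}}}R^{sp'}\|f\|_{L^{p'}(B_R)}^{p'}.
\end{equation*}
To control the Gagliardo seminorm of $u$, recall from Remark~\ref{rem:W^1p} that $\nabla u\in L^p_{\rm loc}(\Omega)$ under the standing assumption $sp'>1$, so Lemma~\ref{lem:W1q-Wsq-embedding} applies and yields $[u]_{W^{s,p}(B_{2R})}^p\le c R^{(1-s)p}\int_{B_{2R}}|\nabla u|^p\,\d x$. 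Substituting back into the Poincar\'e bound, the first term becomes $c\,\delta'\,R^{sp+(1-s)p}\int_{B_{2R}}|\nabla u|^p\,\d x=c\,\delta'\,R^p\int_{B_{2R}}|\nabla u|^p\,\d x$, while the second term collects to $c(\delta')^{-(2-p)/(p-1)}R^{spp'}\|f\|_{L^{p'}(B_R)}^{p'}$ as before. Finally, I would choose $\delta'=\delta/c$ so the coefficient of the gradient term becomes exactly $\delta R^p$, and use $\delta\le 1$ together with $(2-p)/(p-1)\le 1/(p-1)$ for $p\in(1,2)$ to bound $(\delta')^{-(2-p)/(p-1)}\le c\,\delta^{-1/(p-1)}$, giving the stated form.

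The argument is essentially bookkeeping on top of the two earlier lemmas; the only subtle point that must be checked is that the exponent of $\delta^{-1}$ coming from the subquadratic Young inequality in Lemma~\ref{lem:u-v-fractest} fits under the advertised $\delta^{-1/(p-1)}$, and this is precisely the observation that $(2-p)/(p-1)\le 1/(p-1)$ in the range $p\in(1,2)$.
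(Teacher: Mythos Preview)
Your proposal is correct and follows essentially the same approach as the paper: combine the two cases of Lemma~\ref{lem:u-v-fractest} (using $(2-p)/(p-1)\le 1/(p-1)$ to unify the exponent of $\delta$), apply the embedding Lemma~\ref{lem:W1q-Wsq-embedding} to convert $[u]_{W^{s,p}(B_{2R})}^p$ into $R^{(1-s)p}\|\nabla u\|_{L^p(B_{2R})}^p$, and use the fat-zero-set Poincar\'e inequality Lemma~\ref{lem:coz17b-4.7} to pass from $[w]_{W^{s,p}(B_{2R})}^p$ to $\int_{B_R}|w|^p\,\d x$. The only cosmetic difference is the order in which you apply Poincar\'e and the embedding, and that you treat the superquadratic case separately rather than folding it into the unified estimate; neither affects the argument.
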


\begin{proof}
Joining \eqref{eq:u-v-fractest} and 
\eqref{eq:u-v-fractest-p<2} from Lemma~\ref{lem:u-v-fractest} (note that $\frac{2-p}{p-1}\le \frac1{p-1}$ for $p\in(1,2]$) and subsequently applying Lemma~\ref{lem:W1q-Wsq-embedding} to $[u]_{W^{s,p}(B_{2R})}^p$, we have
\begin{align*}
    [u-v]_{W^{s,p}(B_{2R})}^p
    &\le
    \delta [u]_{W^{s,p}(B_{2R})}^p  +
    \frac{c}{\delta^\frac{1}{p-1}} R^{sp'} \|f\|_{L^{p'}(B_R)}^{p'}\\
    &\le
    c\,\delta R^{(1-s)p}\|\nabla u\|_{L^{p}(B_{2R})}^p +
    \frac{c\,R^{sp'}}{\delta^\frac1{p-1}}  \|f\|_{L^{p'}(B_R)}^{p'},
\end{align*}
for any $\delta\in(0,1]$ with $c=c(n,p,s)$. 
The claimed comparison estimate follows in view of the Poincar\'e-type inequality from Lemma~\ref{lem:coz17b-4.7}; in fact, multiplying the last inequality by $c(1-s)(2R)^{sp}$ the left-hand side of the resulting inequality bounds the $L^p$ integral of $u-v$ in $B_{2R}(x_o)$, which is the same as the $L^p$-integral in  $B_R(x_o)$,
because $u-v\equiv 0$  in $\R^n\setminus B_R(x_o)$.
\end{proof}

By $\alpha_o\in(0,1)$ we continue to denote the constant defined in \eqref{def:alpha-o}. Now, we present the comparison estimate for the gradient.

\begin{proposition} 
[Comparison at the gradient level]
\label{lem:comparison-estimate}
Let $p >1$, $s\in(0,1)$ be such that $sp'>1$, $\alpha\in (0,\alpha_o)$, $f \in L^{p'}_{\loc}(\Omega)$, and $B_{2R}\equiv B_{2R}(x_o) \Subset \Omega$. Moreover,  let $u$ be a weak solution to~\eqref{eq:frac-p-lap} in the sense of Definition~\ref{def:weak-sol} and $v$ the unique weak solution to the homogeneous Dirichlet problem \eqref{CD-homo}. 
Then, for any $\delta\in(0,1]$ we have
\begin{align*}
    &\int_{B_{\frac12 R}} |\nabla u - \nabla v|^p \, \d x \\
    &\quad\,\leq 
    \delta \Big[ \|\nabla u\|_{L^p(B_{2R})}^p + 
    R^{n -p} \mathrm{Tail} \big(u-(u)_{B_{R}}; B_R\big)^p \Big] + \frac{c}{\delta^{\frac{1+\alpha}{\alpha(p-1)}}}
R^{p(sp'-1)} \|f\|_{L^{p'}(B_{R})}^{p'},
\end{align*}
where $c=c(n,p,s,\alpha)$. 
\end{proposition}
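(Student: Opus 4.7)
My strategy is to interpolate between the zero-order comparison estimate (Corollary \ref{cor:u-v-fractest}) and the higher differentiability (Theorem \ref{thm:diening-nondifferentiabledata}) via the Gagliardo--Nirenberg inequality from Corollary \ref{cor:GN}, applied to $w:=u-v$. By Remark \ref{rem:W^1p}, both $u$ and $v$ lie in $W^{1+\alpha,p}_{\loc}$ on the relevant balls, so $w\in W^{1+\alpha,p}(B_{R/2})$, and Corollary \ref{cor:GN} yields
\[
    \|\nabla w\|_{L^p(B_{R/2})}
    \le
    \tfrac{c}{R}\|w\|_{L^p(B_{R/2})}
    + c\,\|w\|_{L^p(B_{R/2})}^{\alpha/(1+\alpha)}\,[\nabla w]_{W^{\alpha,p}(B_{R/2})}^{1/(1+\alpha)}.
\]

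Next I would control the Gagliardo seminorm by $[\nabla w]_{W^{\alpha,p}(B_{R/2})}\le [\nabla u]_{W^{\alpha,p}(B_{R/2})}+[\nabla v]_{W^{\alpha,p}(B_{R/2})}$ and apply Theorem \ref{thm:diening-nondifferentiabledata} to each summand on the pair $(B_{R/2},B_R)$. For $u$ this is direct; for $v$, which solves the homogeneous equation on $B_R$ so that the $f$-contribution is absent, the mean and tail of $v-(v)_{B_R}$ must be reduced to the corresponding $u$-quantities. Exploiting $v=u$ outside $B_R$, the triangle inequality gives
\[
    \|v-(v)_{B_R}\|_{L^p(B_R)} \le c\|u-v\|_{L^p(B_R)}+c\|u-(u)_{B_R}\|_{L^p(B_R)},
\]
together with an analogous splitting of the tail, in which $|(u)_{B_R}-(v)_{B_R}|$ is controlled by $c|B_R|^{-1/p}\|u-v\|_{L^p(B_R)}$. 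Poincar\'e's inequality then bounds $\|u-(u)_{B_R}\|_{L^p(B_R)}$ by $cR\|\nabla u\|_{L^p(B_R)}$, and Corollary \ref{cor:u-v-fractest} handles the $\|u-v\|_{L^p(B_R)}$-contribution. Raising to the $p$-th power and collecting $R$-powers produces
\[
    [\nabla w]_{W^{\alpha,p}(B_{R/2})}^p
    \le
    c R^{-p\alpha}\Big[\|\nabla u\|_{L^p(B_{2R})}^p + R^{n-p}\mathrm{Tail}(u-(u)_{B_R};B_R)^p + R^{p(sp'-1)}\|f\|_{L^{p'}(B_R)}^{p'}\Big].
\]

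To conclude, I would raise the Gagliardo--Nirenberg inequality to the $p$-th power and apply weighted Young's inequality (conjugate exponents $(1+\alpha)/\alpha$ and $1+\alpha$, free parameter $\tau$) to split the mixed term as $\tau\|w\|^p+c\tau^{-\alpha}[\nabla w]^p$. Inserting the bound on $[\nabla w]^p$ from the previous step and bounding each residual $\|w\|^p$ by Corollary \ref{cor:u-v-fractest} with a second parameter $\delta_2$ produces a schematic estimate of the form $\int_{B_{R/2}}|\nabla w|^p\,\d x \le K_1\|\nabla u\|_{L^p(B_{2R})}^p + K_2 R^{n-p}\mathrm{Tail}^p + K_3 R^{p(sp'-1)}\|f\|_{L^{p'}(B_R)}^{p'}$, where the coefficients $K_i$ depend on $\tau$ and $\delta_2$. \emph{The main obstacle} is the simultaneous optimization of these two parameters: Corollary \ref{cor:u-v-fractest} trades $\delta_2$ on $\|\nabla u\|^p$ for $\delta_2^{-1/(p-1)}$ on $\|f\|^{p'}$, while the Young step from Gagliardo--Nirenberg trades $\tau^{-\alpha}$ on $\|\nabla u\|^p$ and $\mathrm{Tail}^p$ for $\tau$ on $\|w\|^p$. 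Choosing $\tau\sim \delta^{-1/\alpha}R^{-p}$ and $\delta_2\sim \delta^{(1+\alpha)/\alpha}$ is what drives $K_1,K_2\le \delta$ while packing $\delta_2^{-1/(p-1)}\sim \delta^{-(1+\alpha)/(\alpha(p-1))}$ into $K_3$, yielding the stated inequality.
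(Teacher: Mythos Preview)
Your proposal is correct and follows essentially the same route as the paper: Gagliardo--Nirenberg (Corollary~\ref{cor:GN}) applied to $w=u-v$, a Young split of the mixed term, the bound $[\nabla w]\le[\nabla u]+[\nabla v]$ controlled via Theorem~\ref{thm:diening-nondifferentiabledata} on $(B_{R/2},B_R)$, reduction of the $v$-quantities to $u$-quantities using $v=u$ outside $B_R$ and Poincar\'e, and finally Corollary~\ref{cor:u-v-fractest} for the residual $\|u-v\|_{L^p}$-terms followed by the parameter optimization. The only cosmetic differences are that the paper applies Young \emph{before} raising to the $p$-th power (parametrizing with $\epsilon$ on the seminorm side rather than your $\tau$ on the $\|w\|^p$ side) and collects all $\|u-v\|$-contributions into a single term before invoking Corollary~\ref{cor:u-v-fractest} once, whereas you apply it in two places; neither changes the argument.
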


\begin{proof}
As already mentioned in Remark \ref{rem:W^1p}, by Theorem~\ref{thm:diening-nondifferentiabledata} we have $u\in W^{1+\alpha, p}_{\loc}(\Omega)$ and $v\in W^{1+\alpha, p}_{\loc}(B_R)$. 
This allows to apply first Corollary~\ref{cor:GN} and then Young's inequality with exponents $1+\alpha$ and $\frac{1+\alpha}{\alpha}$ to get that
\begin{align}\label{est:nabla(u-v)}\nonumber
    \| \nabla u &- \nabla v \|_{L^p(B_{\frac12 R})} \\\nonumber
    &\leq 
    \frac{c}{R} \| u-v \|_{L^p(B_{\frac12 R})} +
    c\, \| u-v \|_{L^p(B_{\frac12 R})}^{\frac{\alpha}{1+\alpha}}
    [\nabla u - \nabla v]_{W^{\alpha,p}(B_{\frac12 R})}^{\frac{1}{1+\alpha}} \\
    &\le 
    \frac{c}{R\epsilon^{\frac1\alpha}} 
    \| u-v \|_{L^p(B_{\frac12 R})} +
    \epsilon 
    \Big[R^{\alpha}[\nabla u]_{W^{\alpha,p}(B_{\frac12 R})} + R^{\alpha}[\nabla v]_{W^{\alpha,p}(B_{\frac12 R})}\Big] ,
\end{align}
holds true for any $\epsilon\in(0,1)$ with a constant $c=c(n,p,\alpha)$.
Next, we proceed to estimate the fractional semi-norms on the right-hand side.
To this end, from Theorem~\ref{thm:diening-nondifferentiabledata} and Poincar\'e's inequality we have
\begin{align*}
    &R^{\alpha} [\nabla u]_{W^{\alpha, p}(B_{\frac12 R})} \\
    &\qquad\leq 
      \frac{c}{R}\|u - (u)_{B_{R}}\|_{L^p(B_{R})} +  c\,R^{\frac{n}{p}-1} \mathrm{Tail} \big(u-(u)_{B_{R}};B_{R}\big)
    +
    c\, R^{sp'-1} \|f\|_{L^{p'}(B_{R})}^\frac{1}{p-1} \\
    &\qquad\le 
    c\,\|\nabla u\|_{L^p(B_{R})} + 
   c\, R^{\frac{n}{p}-1} \mathrm{Tail} \big(u-(u)_{B_{R}};B_{R}\big) +
    c\, R^{sp'-1} \|f\|_{L^{p'}(B_{R})}^\frac{1}{p-1} ,
\end{align*}
and also (in this case we apply Theorem~\ref{thm:diening-nondifferentiabledata} to the homogeneous equation) 
\begin{align*}
    R^{\alpha} [\nabla v]_{W^{\alpha, p}(B_{\frac12 R})} 
    &\leq 
    \frac{c}{R}\|v - (v)_{B_{R}}\|_{L^p(B_{R})} +
    c\,R^{\frac{n}{p}-1} \mathrm{Tail} \big(v-(v)_{B_{R}};B_{R}\big) .
\end{align*}
Note that $v \in W^{s,p}(B_R) \cap L^{p-1}_{\loc}(\R^n)$, which allows to apply Theorem~\ref{thm:diening-nondifferentiabledata} on $B_R$. Here again, the first term on the right-hand side of the last display is bounded by Poincar\'e's inequality  by 
\begin{align*}
    \|v - (v)_{B_{R}}\|_{L^p(B_{R})} 
    &\leq 
    2\|v-u\|_{L^p(B_{R})} +
    \|u - (u)_{B_{R}}\|_{L^p(B_{R})} \\
    &\leq 
    2\|v-u\|_{L^p(B_{R})} + 
    c\,R\|\nabla u\|_{L^p(B_{R})}, 
\end{align*}
whereas the second term, i.e.~the tail of $v-(v)_{B_{R}}$, can be bounded by 
\begin{align*}
    \mathrm{Tail} \big(v-(v)_{B_{R}};B_{R}\big) 
    &\leq 
    \mathrm{Tail} \big(v-(u)_{B_{R}};B_{R}\big) + 
    \mathrm{Tail} \big((u)_{B_R}-(v)_{B_{R}};B_{R}\big) \\
    &=: 
    \mathbf T_1 + \mathbf T_2.
\end{align*}
Let us treat $\mathbf T_1$ and $\mathbf T_2$ separately. For $\mathbf T_1$, we have
\begin{align*}
    \mathbf T_1 
    = 
    \bigg[ R^{sp} \int_{\R^n \setminus B_{R}} \frac{|v-(u)_{B_R}|^{p-1}}{|x-x_o|^{n+sp}} \, \d x \bigg]^\frac{1}{p-1} 
    =  
     \mathrm{Tail} \big(u- (u)_{B_R}; B_R\big),
\end{align*}
since $v=u$ on $\R^n\setminus B_R$.
For $\mathbf T_2$ we obtain
\begin{align*}
    \mathbf T_2 
    \leq 
    c\, \big| (u)_{B_R} - (v)_{B_{R}} \big| 
    &\leq 
    c\, R^{-\frac{n}{p}} \|u-v\|_{L^p(B_R)} .
\end{align*}
In conclusion, we have 
\begin{align*}
    &R^{\alpha} [\nabla v]_{W^{\alpha, p}(B_{\frac12 R})} \\
    &\qquad\leq 
    \frac{c}{R}  \|v-u\|_{L^p(B_{R})} +
    c \, \|\nabla u\|_{L^p(B_{R})} +
    c\,  R^{\frac{n}{p}-1} \mathrm{Tail} \big(u-(u)_{B_{R}};B_{R}\big) .
\end{align*}
Inserting the estimates for $[\nabla v]_{W^{\alpha, p}(B_{\frac12 R})}$ and $[\nabla u]_{W^{\alpha, p}(B_{\frac12 R})}$ into \eqref{est:nabla(u-v)} we
obtain
\begin{align*}
    \| \nabla u - \nabla v \|_{L^p(B_{\frac12 R})}^p 
    &\leq 
    \frac{c}{R^p\epsilon^{\frac{p}{\alpha}}}
    \| u-v \|_{L^p(B_{ R})}^p +
    c\,R^{p(sp'-1)} \|f\|_{L^{p'}(B_{R})}^{p'} \\
    &\quad +
    c\,\epsilon^p\Big[\|\nabla u\|_{L^p(B_{R})}^p + 
    R^{n-p} \mathrm{Tail} \big(u-(u)_{B_{R}};B_{R}\big)^p\Big] .
\end{align*}
To estimate the first term on the right-hand side, we apply the comparison estimate from Corollary~\ref{cor:u-v-fractest} with a parameter $\tilde\delta$ instead of $\delta$.  This provides us with
\begin{align*}
    \| \nabla u &- \nabla v \|_{L^p(B_{\frac12 R})}^p \\
    &\leq 
    c\bigg(
    \frac{\tilde\delta}{\epsilon^\frac{p}{\alpha}}+\epsilon^p
    \bigg)\|\nabla u\|_{L^p(B_{2R})}^p
    +
    c\,R^{p(sp'-1)}
    \bigg(1+\frac1{\tilde\delta^\frac1{p-1}}
    \bigg)\|f\|_{L^{p'}(B_{R})}^{p'}
    \\
    &\phantom{\le\,}+
    c\,\epsilon^p 
    R^{n-p} \mathrm{Tail} \big(u-(u)_{B_{R}};B_{R}\big)^p,
\end{align*}
where the parameters $\epsilon$ and $\tilde\delta$ are still at our disposal. Given $\delta\in (0,1]$ we first choose $\epsilon$ such that $c\,\epsilon^p=\tfrac12\delta$. This specifies $\epsilon$ in dependence on $\delta$, and consequently converts the term  $c\,\tilde\delta\,\epsilon^{-\frac{p}{\alpha}}$ into $c\,\tilde\delta
\big(\frac{\delta}{2c}\big)^{-\frac1\alpha}$. Next, we choose $\tilde\delta$ in such a way that
$c\, \tilde\delta= \tfrac12 \delta\big(\frac{\delta}{2c}\big)^\frac1\alpha$. With these choices we obtain
\begin{align*}
    \| \nabla u &- \nabla v \|_{L^p(B_{\frac12 R})}^p \\
    &\leq 
    \delta \Big[ 
    \|\nabla u\|_{L^p(B_{2R})}^p
    +
    R^{n-p} \mathrm{Tail} \big(u-(u)_{B_{R}};B_{R}\big)^p\Big]
    +
    \frac{c\, R^{p(sp'-1)}}{\delta^\frac{1+\alpha}{\alpha(p-1)}}\|f\|_{L^{p'}(B_{R})}^{p'},
\end{align*}
where $c=c(n,p,s,\alpha)$. This proves the claim. 
\end{proof}

\section{Proof of Theorem \ref{thm:main}}\label{S:proof}

We first prove the result in the special case $R=1$ and $x_o=0$ and then conclude the general case by a re-scaling argument via Lemma~\ref{lem:equation-scaling}.  The setting is as follows. Let $1\le s_1<s_2\le 2$. Consider balls $B_1 \subset B_{s_1} \subset B_{s_2} \subset B_2$  centered at the origin,
and define
\begin{align}\label{def:lambda_0}
    \lambda_o^p 
    := 
    \bint_{B_2} \big[|\nabla u|^p + \mathsf M^p |f|^{p'} \big]\, \d x + 
    \mathrm{Tail} \big(u-(u)_{B_2};B_2\big)^p ,
\end{align}
for some $\mathsf M\ge 1$ to be chosen later.

\subsection{Stopping time argument.}
For any point $y_o \in B_{s_1}$ and radius $r$ with
\begin{equation*}
    \frac{s_2-s_1}{2^7} \leq r \leq \frac{s_2-s_1}{2},
\end{equation*}
we have $B_r(y_o)\subset B_2$.
We define 
\begin{equation}\label{def:B}
    \mathfrak B:=
    \bigg(
    \frac{2^{8}}{s_2-s_1}
    \bigg)^{\frac{n}{p-1}+1}.
\end{equation}
Note that $\frac{n}{p-1}+1>\frac{n}{p}$. 
Then
\begin{equation} \label{eq:lambda-sub}
    \bint_{B_r(y_o)} 
    \big[|\nabla u|^p +\mathsf  M^p |f|^{p'} \big]\, \d x < \lambda^p
\end{equation}
for any $r$ in the above range and any
\begin{equation} \label{eq:lambda-geq-Blambda0}
    \lambda > \mathfrak B \lambda_o.
\end{equation}
Now let $\lambda$ be as in \eqref{eq:lambda-geq-Blambda0} and consider the {\it super-level set}
$$
    \boldsymbol E(\lambda,s_1) 
    := 
    \big\{\mbox{$x \in B_{s_1}$: $x$ is a Lebesgue point of $ |\nabla u|$ and $|\nabla u|(x) > \lambda$}\big\}.
$$
Lebesgue's differentiation theorem ensures that for every $y_o\in \boldsymbol E(\lambda,s_1)$ it holds that 
\begin{align*}
    \lim_{\rho\downarrow 0} \bint_{B_\rho(y_o)} 
    \big[
    |\nabla u|^p +\mathsf M^p |f|^{p'}\big] \, \d x 
    \ge
    |\nabla u(y_o)|^p
    >
    \lambda^p.
\end{align*}
Taking \eqref{eq:lambda-sub} into account, the absolute continuity of the integral ensures the existence of a maximal radius $\rho_{y_o}\in(0,\frac{s_2-s_1}{2^7})$, so that
\begin{equation*}
    \bint_{B_{\rho_{y_o}}(y_o)} 
    \big[
    |\nabla u|^p + \mathsf M^p |f|^{p'}\big] \, \d x 
    =
    \lambda^p 
\end{equation*}
and 
\begin{equation}\label{eq:lambda-sub-i}
    \bint_{B_{\rho}(y_o)} 
    \big[
    |\nabla u|^p +\mathsf M^p |f|^{p'}
    \big]\, \d x 
    <
    \lambda^p ,
    \qquad\mbox{for any $\rho\in (\rho_{y_o}, \frac{s_2-s_1}{2}]$.}
\end{equation}

\subsection{Covering of super-level sets}
Now consider the collection of balls \begin{equation*}
    \mathcal F:=\big\{B_{\rho_{y_o}}(y_o)\big\}_{y_o\in 
    \boldsymbol E(\lambda,s_1)}
\end{equation*}
constructed in the last section. By Vitali's covering theorem there exists a countable family of disjoint balls $\{B_i\}_{i \in \N} = \{B_{\rho_{x_i}}(x_i)\}_{i \in \N }\subset\mathcal F$ such that 
$$
    \boldsymbol E(\lambda,s_1)
    \subset 
    \bigcup_{B\in\mathcal F} B
    \subset 
    \bigcup_{i\in\N} B_{5\rho_{x_i}}(x_i).
$$
In particular, we have  for any $i\in\N$ that $x_i \in \boldsymbol E(\lambda,s_1)$, $\rho_{x_i}\in (0,\frac{s_2-s_1}{2^7})$, 
\begin{equation}\label{eq:intr}
    \bint_{B_i}\big[ |\nabla u|^p +\mathsf M^p |f|^{p'}\big] \, \d x = \lambda^p,
\end{equation}
and
\begin{equation} \label{eq:stopping-time-above-maxradius}
    \bint_{B_{\rho}(x_i)}
    \big[|\nabla u|^p + \mathsf M^p |f|^{p'} 
    \big]\, \d x 
    < 
    \lambda^p 
    \quad \mbox{ for all $\rho\in \big(\rho_{x_i}, \frac{s_2-s_1}{2}\big]$.}
\end{equation}
Finally, we introduce the abbreviations 
$$
\rho_i := \rho_{x_i},\quad B_i^{(j)} := B_{2^j \rho_{i}}(x_i),\quad \mbox{for $ i,\,j \in \N$.}
$$ 
 Clearly,  for each $i\in\N$ we have $B_i^{(7)}\subset B_2$.

\subsection{Comparison}
Let $v_i\in W^{s,p}(B_i^{(5)}) \cap L^{p-1}_{sp} (\R^n)$ be the unique weak solution to the Dirichlet problem
\begin{equation*} 
\left\{
\begin{array}{cl}
      (- \Delta_p)^s v_i  = 0 
      & 
      \mbox{in $ B_i^{(5)}$,} \\[7pt]
      v_i = u & \mbox{a.e.~in $\R^n \setminus B_i^{(5)}$.}
   \end{array}
\right.
\end{equation*}
Let $\mathsf A > 1$ and $\theta>p$ be constants to be determined later depending on the data and $\alpha=\frac12\alpha_o$, where $\alpha_o\in(0,1)$ is the constant defined in \eqref{def:alpha-o}. Applying Lemma~\ref{lem:elementary-superlevel} with $\big(\gamma,\alpha, K, a,b\big)$ replaced by $\big( p,\theta, \mathsf A\lambda, \nabla u(x),\nabla v_i(x)\big)$ and integrating over $B_i^{(3)}\cap \left\{ |\nabla u| > \mathsf A \lambda \right\}$,
we obtain
\begin{align} \label{eq:grad-u-superlevel}\nonumber
    &\int_{B_i^{(3)}\cap \left\{ |\nabla u| >\mathsf A \lambda \right\}} |\nabla u|^p \, \d x \\
    &\qquad\leq 
    2^{p} \int_{B_i^{(4)}} |\nabla u - \nabla v_i|^p \, \d x + 
    \frac{2^{\theta-1}}{(\mathsf A\lambda)^{\theta-p}} \int_{B_i^{(3)}} |\nabla v_i|^\theta \, \d x  
    =:  
    \mathbf{I} + \mathbf{II},
\end{align}
with the obvious meaning of $\mathbf{I}$
and $\mathbf{II}$. Note that in $\mathbf{I}$ we increased the domain of integration from $B_i^{(3)}$ to $B_i^{(4)}$. We treat  $\mathbf{I}$
and $\mathbf{II}$ separately.
For any $\delta\in(0,1]$, Proposition~\ref{lem:comparison-estimate} implies
\begin{align*}
    \mathbf{I} 
    &\leq 
    2^p \delta
    \bigg[ 
    \int_{B_i^{(6)}}|\nabla u|^p\,\d x
    + 
    \big(2^5\rho_i\big)^{n -p} \mathrm{Tail} 
    \big(u-(u)_{B_i^{(5)}};B_i^{(5)}
    \big)^p \bigg]\\
    &\phantom{\le\,}
    + 
    \frac{c}{\delta^\frac{1+\alpha}{\alpha(p-1)}} \big(2^5\rho_i\big)^{p(sp'-1)} 
    \int_{B_i^{(5)}}
    |f|^{p'}\,\d x .
\end{align*}
The three terms on the right-hand side are estimated as follows. First,
by~\eqref{eq:lambda-sub-i} we have  
\begin{equation*}
    \int_{B_i^{(6)}} |\nabla u|^p 
    \, \d x
    < \big|B_i^{(6)}\big| \lambda^p
    =
    2^{6n} |B_i|\lambda^p,
\end{equation*}
whereas the last term is bounded by
\begin{equation*}
    \big(2^5\rho_i\big)^{p(sp'-1)}
    \int_{B_i^{(5)}} |f|^{p'} \, \d x
    <
    \frac{\big|B_i^{(5)}\big|}{\mathsf M^p}\lambda^p
    <\frac{2^{5n}}{\mathsf M^p}|B_i|\lambda^p
\end{equation*}
since $2^5\rho_i \leq \frac12$. As for the tail-term, observe that due to $2^5\rho_i\in (0,\frac{s_2-s_1}{2})$ there exists $k\in\N_0$ such that \begin{equation*}
    \frac{s_2-s_1}{4}\le 2^k2^5\rho_i<\frac{s_2-s_1}{2}.
\end{equation*}
This allows us to apply Lemma~\ref{lem:tail-rho} on $B_i^{(5)}$ with $R:=\frac{s_2-s_1}{2}$ and obtain that (recall that the radius of $B_i^{(5)}$ is $2^5\rho_i$)
\begin{align*} 
    \mathrm{Tail} \big(u-(u)_{B_i^{(5)}}; B_i^{(5)}\big) 
    &\leq 
    \frac{c}{R^{\frac{n}{p-1}}}\Big(\frac{\rho_i}{R}\Big)^{sp'} 
    \Bigg[\mathrm{Tail} \big(u-(u)_{B_{2}};B_{2}\big) +
    \bigg[  \,
    \bint_{B_{2}}
    |\nabla u|^{p} \,\d x\bigg]^\frac{1}{p} \Bigg]\\
    &\quad + 
    c\, \rho_i \sum_{j=1}^{k} 2^{-j(sp' - 1)} \bigg[ \,\bint_{B_i^{(j+5)}} |\nabla u|^p \, \d x \bigg]^\frac{1}{p}  .
\end{align*}
Using  the definition of $\lambda_o$ from \eqref{def:lambda_0}, \eqref{eq:stopping-time-above-maxradius} and the assumption $sp'>1$, the right-hand side can be further estimated. Indeed, we have
\begin{align}\label{est:tail-B5} \nonumber
    \mathrm{Tail} \big(u-(u)_{B_i^{(5)}}; B_i^{(5)}\big) 
    &\leq 
    \frac{c}{R^{\frac{n}{p-1}}}\Big(\frac{\rho_i}{R}\Big)^{sp'} 
    \lambda_o + 
    c\, \rho_i \lambda\sum_{j=1}^{k} 2^{-j(sp' - 1)} \\\nonumber
    &\le
    \frac{c\,\rho_i\lambda }{R^{\frac{n}{p-1}+1}\mathfrak B} +
     c \,\rho_i \lambda\sum_{j=1}^{\infty} 2^{-j(sp' - 1)}\\
    &\le 
    c \,\rho_i\lambda.
\end{align}
To obtain the last line we used the definition of $\mathfrak B$ in \eqref{def:B} and $R=\frac{s_2-s_1}{2}$.
Inserting the preceding inequalities on the right-hand side of the above estimate for $\mathbf I$, we obtain
\begin{align}\label{est:I}
    \mathbf{I} 
    \leq 
    c \bigg[\delta + 
    \frac{1}{\delta^\frac{1+\alpha}{\alpha(p-1)} \mathsf M^p}\bigg]  |B_i| \lambda^p.
\end{align}
Note that \eqref{est:I} holds for any $\delta\in(0,1]$.

To estimate the term $\mathbf{II}$ in~\eqref{eq:grad-u-superlevel}, we apply Theorem~\ref{thm:Lq-gradient} for $v_i - (v_i)_{B_i^{(4)}}$ on $B_i^{(4)}$. Note that $sp'>1$ implies that $s>\frac{(p-2)_+}{p}$. 
Hence, we obtain 
\begin{align*}
    \mathbf{II} 
    &\leq 
    \frac{c}{(\mathsf A\lambda)^{\theta-p}} |B_i| 
    \left[ \bint_{B_i^{(4)}} |\nabla v_i|^p \,\d x + 
    \rho_i^{-p}\mathrm{Tail} \big(v_i-(v_i)_{B_i^{(4)}}; B_i^{(4)}\big)^p \right]^\frac{\theta}{p} \\
    &\leq 
    \frac{c}{(\mathsf A\lambda)^{\theta-p}} |B_i| \left[ 
     \frac{\mathbf I}{|B_i|}+
    \bint_{B_i^{(4)}} |\nabla u|^p \,\d x +
    \rho_i^{-p}\mathrm{Tail} \big(v_i-(v_i)_{B_i^{(4)}}; B_i^{(4)}\big)^p \right]^\frac{\theta}{p}, 
\end{align*}
where $c = c(n,p,s,\theta)> 0$.  The first term on the right-hand side can be  bounded via inequality~\eqref{est:I} with the choice $\delta=1$, while the  second one can be bounded by~\eqref{eq:stopping-time-above-maxradius}. Finally, for the third term we have 
\begin{align*}
    \mathrm{Tail} \big(v_i-(v_i)_{B_i^{(4)}}; B_i^{(4)}\big)
    &\leq 
     c\big[\mathbf T_1 + \mathbf T_2 + \mathbf T_3\big],
\end{align*}
where
\begin{equation*}
    \mathbf T_1
    :=
    \mathrm{Tail} \big(u-(u)_{B_i^{(4)}}; B_i^{(4)}\big),
    \qquad
    \mathbf T_2
    :=
    \mathrm{Tail} \big(v_i-u;B_i^{(4)}\big),
\end{equation*}
and
\begin{equation*}
    \mathbf T_3
    :=
    \mathrm{Tail} \big((u)_{B_i^{(4)}}-(v_i)_{B_i^{(4)}};B_i^{(4)}\big).
\end{equation*}
 We aim to the term $\mathbf T_1$ by \eqref{est:tail-B5}. 
This is achieved by first splitting the domain of integration into $\R^n\setminus  B_i^{(5)}$ 
and $B_i^{(5)}\setminus B_i^{(4)}$. The local integral is estimated by using  $|x-x_o|\ge 2^4\rho_i$ and then H\"older's inequality to raise the power from $p-1$ to $p$. Next, in both the local integral and the tail term
we add and subtract the mean value of $u$ in $B_i^{(5)}$ which creates a third term--the difference of mean values. However, it can be readily estimated via H\"older's inequality. Then, Poincar\'e's inequality can be applied to estimate the local integral. Specifically, we have
\begin{align*}
    \mathbf T_1
    &\le
    c \Tail \big(u-(u)_{B_i^{(4)}}; B_i^{(5)}\big)
    +
    c
    \Bigg[
    (2^4\rho_i)^{sp}
    \int_{B_i^{(5)}
    \setminus B_i^{(4)}}
    \frac{|u-(u)_{B_i^{(4)}}|^{p-1}}{|x-x_o|^{n+sp}}\,\dx
    \Bigg]^\frac1{p-1}\\
    &\le
   c \Tail \big(u-(u)_{B_i^{(4)}}; B_i^{(5)}\big)
   +c\bigg[
    \bint_{B_i^{(5)}}
    |u-(u)_{B_i^{(4)}}|^{p-1}\,\dx
    \bigg]^\frac1{p-1}\\
    &\le
    c \Tail \big(u-(u)_{B_i^{(4)}}; B_i^{(5)}\big)
   +c\bigg[
    \bint_{B_i^{(5)}}
    |u-(u)_{B_i^{(4)}}|^{p}\,\dx
    \bigg]^\frac1{p}\\
    &\le
    c \Tail \big(u-(u)_{B_i^{(5)}}; B_i^{(5)}\big)
    +
    c\bigg[
    \bint_{B_i^{(5)}}
    |u-(u)_{B_i^{(5)}}|^{p}\,\dx
    \bigg]^\frac1{p}
    +c\,\big|(u)_{B_i^{(5)}}-(u)_{B_i^{(4)}}\big|
    \\
    &\le
    c \Tail \big(u-(u)_{B_i^{(5)}}; B_i^{(5)}\big)
    +
    c\bigg[
    \bint_{B_i^{(5)}}
    |u-(u)_{B_i^{(5)}}|^{p}\,\dx
    \bigg]^\frac1{p}\\
    &\le
    \underbrace{c\Tail \big(u-(u)_{B_i^{(5)}}; B_i^{(5)}\big)}_{\le c\,\rho_i\lambda,\;\mbox{\footnotesize by \eqref{est:tail-B5}}}
    +\,
    c\,\rho_i\underbrace{\bigg[\bint_{B_i^{(5)}}
    |\nabla u|^{p}\,\dx
    \bigg]^\frac1{p} }_{\le\lambda,\;\mbox{\footnotesize by \eqref{eq:stopping-time-above-maxradius}}}\le c\,\rho_i\lambda.
\end{align*}
For $\mathbf T_2$ we recall that $v_i=u$ on $\R^n \setminus B_i^{(5)}$. Subsequently, applying Hölder's inequality and  Corollary~\ref{cor:u-v-fractest} we obtain
\begin{align*}
    \mathbf T_2 +\mathbf T_3 
    &= 
    \bigg[(2^4\rho_i)^{sp} \int_{B_i^{(5)} \setminus B_i^{(4)}} \frac{|v_i-u|^{p-1}}{|x-x_i|^{n+sp}} \, \d x \bigg]^{\frac{1}{p-1}} +
    c\,\big|(u)_{B_i^{(4)}}-(v_i)_{B_i^{(4)}}\big|\\
    &\le 
    c \bigg[\bint_{B_i^{(5)}} |v_i-u|^{p-1} \,\d x\bigg]^{\frac{1}{p-1}} +
    c\,\bint_{B_i^{(5)}} |v_i-u| \,\d x\\
    &\le 
    c \bigg[\bint_{B_i^{(5)}} |v_i-u|^{p} \,\d x\bigg]^{\frac{1}{p}} \\
    &\le 
    c\bigg[\rho_i^p\bint_{B_i^{(6)}} |\nabla u|^p \, \d x +
    \rho_i^{spp'} \bint_{B_i^{(5)}}  |f|^{p'} \, \d x \bigg]^{\frac{1}{p}} \\
    &\le 
    c\bigg[
    \rho_i^p\lambda^p
    +\rho_i^{spp'}\frac{\lambda^p}{\mathsf M^p}
    \bigg]\\
    &\le 
    c\,\rho_i\lambda ,
\end{align*}
where to obtain the second last line we used \eqref{eq:stopping-time-above-maxradius}. Moreover, we took advantage of $\rho_i\le 1$ and $\mathsf M\ge 1$ in the passage from the penultimate to the last line.  Overall, we have shown that 
$$
    \mathbf{II} 
    \leq 
    \frac{c}{\mathsf A^{\theta-p}} \lambda^p |B_i|,
$$
for a constant $c = c(n,p,s,\theta)> 0$. Together with the estimate for $\mathbf{I}$ we conclude that
\begin{align}\label{est:super-nabla-u}
    \int_{B_i^{(3)}\cap \left\{ |\nabla u| >\mathsf A \lambda \right\}} |\nabla u|^p \, \d x 
    \le 
    c\, \boldsymbol{\mathfrak{M}} \, \lambda^p |B_i| ,
\end{align}
where 
\begin{equation}\label{def:M}
    \boldsymbol{\mathfrak{M}}
    :=
    \delta + 
    \frac{1}{\delta^\frac{1+\alpha}{\alpha(p-1)} \mathsf M^p} +
    \frac{1}{\mathsf A^{\theta-p}}.
\end{equation}

\subsection{Estimates on super-level sets}
In \eqref{eq:intr} we split the domain of integration for $u$ into the super-level set $B_i \cap \{|\nabla u| > \lambda/4 \}$ and the sub-level set $B_i \cap \{|\nabla u| \le \lambda/4 \}$, and for $f$ into $B_i \cap \{|f| > (\lambda/(4\mathsf M))^{p-1} \}$ and its complement.
On the respective sub-level sets we estimate $u$ and $f$ by their upper bounds.
This yields
\begin{align*}
    |B_i|\lambda^p
    &=
    \int_{B_i}\big[ |\nabla u|^p +\mathsf M^p |f|^{p'}\big] \, \d x \\
    &= 
    \int_{B_i \cap \{|\nabla u| > \lambda/4 \}} |\nabla u|^p \, \d x + 
    \mathsf M^p \int_{B_i \cap \{|f| > (\lambda/(4\mathsf M))^{p-1} \}} |f|^{p'} \, \d x \\
    &\quad +
    \int_{B_i \cap \{|\nabla u| \le \lambda/4 \}} |\nabla u|^p \, \d x + 
    \mathsf M^p \int_{B_i \cap \{|f| \le (\lambda/(4\mathsf M))^{p-1} \}} |f|^{p'} \, \d x \\
    &\le 
    \int_{B_i \cap \{|\nabla u| > \lambda/4 \}} |\nabla u|^p \, \d x + 
    \mathsf M^p \int_{B_i \cap \{|f| > (\lambda/(4\mathsf M))^{p-1} \}} |f|^{p'} \, \d x +
    \tfrac{2}{4^p}  |B_i| \lambda^p.
\end{align*}
Re-absorb $\frac{2}{4^p}  |B_i|\lambda^p$ into the left-hand side and  insert the result into \eqref{est:super-nabla-u}. In this way, we obtain
\begin{align*}
    &\int_{B_i^{(3)}\cap \left\{ |\nabla u| > \mathsf A \lambda \right\}} |\nabla u|^p \, \d x \\
    &\quad\leq 
    c\,\boldsymbol{\mathfrak{M} }
    \bigg[\int_{B_i \cap \{|\nabla u| > \lambda/4 \}} |\nabla u|^p \, \d x + 
    \mathsf M^p \int_{B_i \cap \{|f| > (\lambda/(4\mathsf M))^{p-1} \}} |f|^{p'} \, \d x \bigg].
\end{align*}
Recall that $\{B_i^{(3)} \}_{i\in \N}$ covers the super-level set $\boldsymbol E(\lambda,s_1)$, and thus also $\boldsymbol E(\mathsf A \lambda,s_1)$, and that the balls $B_i$ are pairwise disjoint and contained in $B_{s_2}$. Consequently,
\begin{align*}
    &\int_{\boldsymbol E(\mathsf A \lambda,s_1)} |\nabla u|^p \, \d x \\
    &\qquad\le
    \sum_{i\in\N}
    \int_{B_i^{(3)}\cap \left\{ |\nabla u| >\mathsf A \lambda \right\}} |\nabla u|^p \, \d x\\
    &\qquad\le
    c\,\boldsymbol{\mathfrak{M} }
    \sum_{i\in\N}
    \bigg[\int_{B_i \cap \{|\nabla u| > \lambda/4 \}} |\nabla u|^p \, \d x + 
    \mathsf M^p \int_{B_i \cap \{|f| > (\lambda/(4\mathsf M))^{p-1} \}} |f|^{p'} \, \d x \bigg]\\
    &\qquad\leq 
    c\, \boldsymbol{\mathfrak{M} } \bigg[\int_{\boldsymbol E(\lambda/4,s_2)} |\nabla u|^p \, \d x + \mathsf M^p \int_{B_{s_2} \cap \{|f| > (\lambda/(4\mathsf M))^{p-1} \}} |f|^{p'} \, \d x \bigg].
\end{align*}
Instead of $\boldsymbol{E}(\lambda,s_1)$ we consider for $k\ge\mathfrak B\lambda_o$ the set
\begin{equation*}
    \boldsymbol{E}_k(\lambda,s_1) 
    := 
    \Big\{x \in B_{s_1} :x \text{ is a Lebesgue point of } |\nabla u| \text{ and } |\nabla u|_k(x) > \lambda\Big\}
\end{equation*}
where $|\nabla u|_k := \min \big\{ |\nabla u|, k \big\}$ is the truncation of $|\nabla u|$ at height $k$. We rewrite the last inequality in terms of $\boldsymbol E_k$ and obtain
\begin{align*}
    &\int_{\boldsymbol E_k(\mathsf A \lambda,s_1)} |\nabla u|^p \, \d x \\
    &\qquad\leq 
    c\, \boldsymbol{\mathfrak{M}}
    \bigg[\int_{\boldsymbol E_k(\lambda/4,s_2)} |\nabla u|^p \, \d x + \mathsf M^p \int_{B_{s_2} \cap \{|f| > (\lambda/(4\mathsf M))^{p-1} \}} |f|^{p'} \, \d x \bigg].
\end{align*}
We multiply the inequality by $\lambda^{pq-p-1}$ and integrate with respect to $\lambda$ over the interval $(\mathfrak B \lambda_o, \infty)$. For the integral on the left-hand side, using a Fubini type argument, we get
\begin{align*}
    &\int_{\mathfrak B \lambda_o}^\infty \lambda^{pq-p-1} \int_{\boldsymbol E_k(\mathsf A\lambda,s_1)} |\nabla u|^p \, \d x \d \lambda\\
    &\qquad = 
    \frac{1}{pq-p} \bigg[ \mathsf A^{p-pq} \int_{B_{s_1}} |\nabla u|_k^{pq-p} |\nabla u|^p  \, \d x  - (\mathfrak B \lambda_o)^{pq-p} \int_{B_{s_1}} |\nabla u|^p \, \d x \bigg].
\end{align*}
The terms on the right-hand side can be estimated similarly as
\begin{align*}
    \int_{\mathfrak B \lambda_o}^\infty \lambda^{pq-p-1} \int_{\boldsymbol E_k(\lambda/4,s_2)} |\nabla u|^p \, \d x \d \lambda
    &\leq 
    \frac{4^{pq-p}}{pq-p} \int_{B_{s_2}} |\nabla u|_k^{pq-p} |\nabla u|^p  \, \d x,
\end{align*}
and
\begin{align*}
    \int_{\mathfrak B \lambda_o}^\infty \lambda^{pq-p-1} \int_{B_{s_2} \cap \{|f| > (\lambda/(4\mathsf M))^{p-1} \}} |f|^{p'} \, \d x \d \lambda  \leq \frac{(4\mathsf M)^{pq-p}}{pq-p} \int_{B_{s_2}} |f|^{qp'} \, \d x.
\end{align*}
Combining these estimates results in
\begin{align*}
    \int_{B_{s_1}} &|\nabla u|_k^{pq-p} |\nabla u|^p  \, \d x \\
    &\leq 
    c_\ast \mathsf A^{pq-p} \boldsymbol{\mathfrak{M}} \int_{B_{s_2}} |\nabla u|_k^{pq-p} |\nabla u|^p  \, \d x\\
    &\phantom{\le\,} +
     (\mathsf A\mathfrak B\lambda_o)^{pq-p} \int_{B_{s_1}} |\nabla u|^p \, \d x  + 
    c_\ast \mathsf A^{pq-p} \boldsymbol{\mathfrak{M}} 
   \mathsf M^{pq} \int_{B_{s_2}} |f|^{qp'} \, \d x,
\end{align*}
where $\boldsymbol{\mathfrak M}$ is defined in \eqref{def:M}. The strategy is now to absorb the first term on the right-hand side into the left-hand side. This requires careful choices of the parameters in $\boldsymbol{\mathfrak{M}}$, i.e.~of $\theta$, $\mathsf A$, $\delta$, and $\mathsf M$.
We start by choosing $\theta = 2pq$. Next, we choose $\mathsf A$ large enough such that
\begin{equation*}
     \frac{c_\ast}{\mathsf A^{pq}} 
    = 
    \tfrac 16\quad\Longleftrightarrow\quad
    \mathsf A=
     (6c_\ast)^\frac1{pq}.
\end{equation*}
Subsequently, we choose $\delta$ 
small enough such that 
\begin{equation*}
    c_\ast \mathsf A^{pq-p} \delta \equiv 
    c_\ast(6 c_\ast)^{\frac{q-1}{q}}\delta
    = \tfrac16\quad\Longleftrightarrow\quad
    \delta= (6c_\ast)^{\frac1q-2}.
\end{equation*}
Finally we choose $\mathsf M$ large enough such that 
\begin{align*}
    \frac{c_\ast \mathsf A^{pq-p}}{\delta^\frac{1+\alpha}{\alpha (p-1)} \mathsf M^p} 
    \equiv
    \frac{c_\ast (6c_\ast)^{\frac{q-1}{q} + (2-\frac1q)\frac{1+\alpha}{\alpha(p-1)}}}{\mathsf M^p}
    = 
    \tfrac16
    \quad\Longleftrightarrow\quad
    \mathsf M^p=(6c_\ast)^{(2-\frac1q)\frac{1+\alpha p}{\alpha(p-1)}}.
\end{align*}
These choices ensure that, on the one hand, $c_\ast  \mathsf A^{pq-p} \boldsymbol{\mathfrak{M}}=\frac12$, and on the other hand $\mathsf M$ depends only on $n,p,q,s$ (recall that $\alpha=\alpha(p,s)$). 
With the aforementioned choices, we obtain
\begin{align*}
    \int_{B_{s_1}} &|\nabla u|_k^{pq-p} |\nabla u|^p  \, \d x \\
    &\leq 
   \tfrac12 \int_{B_{s_2}} |\nabla u|_k^{pq-p} |\nabla u|^p  \, \d x \\
   &\quad +  
   (\mathsf A\mathfrak B\lambda_o)^{pq-p} \int_{B_{2}} |\nabla u|^p \, \d x  + 
    \tfrac12 \mathsf M^{pq} \int_{B_{s_2}} |f|^{qp'} \, \d x \\
    &\leq 
    \tfrac12 \int_{B_{s_2}} |\nabla u|_k^{pq-p} |\nabla u|^p \, \d x \\
    &\quad + 
    \frac{c\, \lambda_o^{pq-p}}{(s_2-s_1)^{\beta }} \int_{B_2} |\nabla u|^p \, \d x  + 
    c \int_{B_2} |f|^{qp'} \, \d x,
\end{align*}
where $\beta = (np'+p)(q-1)$. To obtain the last line we also used the definition of $\mathfrak B$ in~\eqref{def:B}. Note that the last inequality holds for any $1\le s_1<s_2\le 2$. Therefore, the iteration lemma~\ref{lem:tech} can be applied. The application results in
\begin{equation*}
     \int_{B_{1}} |\nabla u|_k^{pq-p} |\nabla u|^p  \, \d x
     \le
     c\,
     \lambda_o^{pq-p} \int_{B_2} |\nabla u|^p \, \d x + c \int_{B_2} |f|^{qp'} \, \d x.
\end{equation*}
Using Fatou's lemma we pass to the limit  $k \to \infty$ and obtain
\begin{align*}
    \int_{B_1} |\nabla u|^{pq}  \, \d x  
    &\leq 
    c\, \lambda_o^{pq-p} \int_{B_2} |\nabla u|^p \, \d x + 
    c \int_{B_2} |f|^{qp'} \, \d x.
\end{align*}
Recalling the definition of $\lambda_o$ from \eqref{def:lambda_0} and passing  to the mean values, we get
\begin{align*}
    \bigg[ \bint_{B_{1}} |\nabla u|^{pq} \, \d x \bigg]^\frac{1}{q}
    \leq 
    c \,\bint_{B_{2}}  |\nabla u|^p  \, \d x + c \bigg[ \bint_{B_2} |f|^{qp'} \, \d x \bigg]^\frac{1}{q} + c \, \mathrm{Tail} \big(u-(u)_{B_2};B_2\big)^{p}.
\end{align*}
Scaling back to $B_{2R}$ with the aid of Lemma~\ref{lem:equation-scaling} finally yields, 
\begin{align*}
\bigg[ \bint_{B_{R}} |\nabla u|^{pq} \, \d x \bigg]^\frac{1}{q} &\leq c\, \bint_{B_{2R}} |\nabla u|^p \, \d x + \frac{c}{R^{(1-s)p}} \bigg[ \bint_{B_{2R}} |R^s f|^{qp'} \, \d x \bigg]^\frac{1}{q} \\
&\quad  + \frac{c}{R^p} \mathrm{Tail} \big(u-(u)_{B_{2R}};B_{2R}\big)^p.
\end{align*}
This concludes the proof.

\medskip
\noindent
{\bf Acknowledgments.}  N.~Liao is supported by the Austrian Science Fund (FWF) project \emph{On the Stefan type problems}, 
Grant DOI 10.55776/P36272.
K.~Moring is supported by the the Austrian Science Fund (FWF)
project \emph{Widely degenerate partial differential equations}, 
Grant DOI 10.55776/P36295.
Our thanks also go to the Erwin Schr\"odinger International Institute for Mathematics and Physics (Vienna, Austria), where the final version of this work was completed during the workshop \emph{Degenerate and Singular PDEs}.

\end{document}